\title{ON INEQUALITIES BETWEEN UNKNOTTING NUMBERS AND CROSSING NUMBERS OF SPATIAL EMBEDDINGS OF TRIVIALIZABLE GRAPHS AND HANDLEBODY-KNOTS}
\author{YUTA AKIMOTO}
\keywords{spatial graph, handcuff-graph, theta curve, handlebody-knot, unknotting number, crossing number}
\newtheorem{thm}{Theorem}[section]
\newtheorem{prop}[thm]{Proposition}
\newtheorem{lemma}[thm]{Lemma}
\theoremstyle{definition}
\newtheorem{remark}[thm]{Remark}
\begin{document}
\maketitle

\begin{abstract}
We study relations between unknotting number and crossing number of a spatial embedding of a handcuff-graph and a theta curve. It is well known that for any non-trivial knot $K$ twice the unknotting number of $K$ is less than or equal to the crossing number of $K$ minus one. We show that this is extended to handlebody-knots. We also characterize the handlebody-knots which satisfy the equality. 
\end{abstract} 

\section{INTRODUCTION}
Let $L$ be a link in the 3-dimensional Euclidean space $\mathbb{R}^3$. The \textit{unknotting number} $u(L)$ is the minimal number of crossing changes\ (Fig.\ \ref{cc}) from $L$ to a trivial link. The \textit{crossing number} $c(L)$ is the minimal number of crossing points among all regular diagrams of $L$. It is well-known that $u(L)$ is less than or equal to half of $c(L)$\ (see for example\ \cite{unbound}). In\ \cite{unbound} Taniyama characterized the links which satisfy the equality as follows. 

\begin{figure}[h]
\centering
\includegraphics[height=0.12\linewidth]{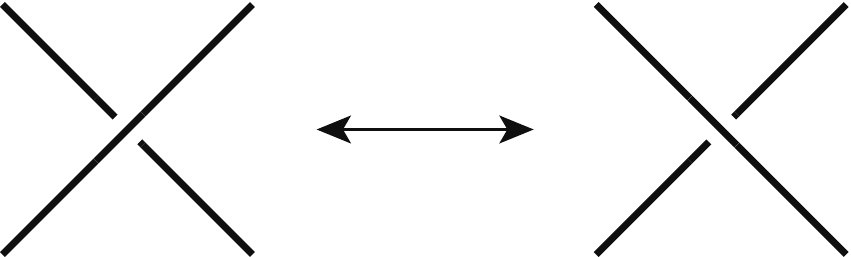}
\caption{}
\label{cc}
\end{figure}

\begin{thm} \cite[Theorem $1.5\,(2)$] {unbound}
Let L be a $\mu-$component link that satisfies the equality $u(L) =\dfrac{c(L)}{2}$. Then $L$ has a diagram $D = \gamma_1 \cup \cdots \cup \gamma_{\mu}$ such that each $\gamma_i$ is a simple closed curve on $\mathbb{R}^2$ and for each pair $i,\ j$, the subdiagram $\gamma_i \cup \gamma_j$ is an alternating diagram or a diagram without crossings.
\end{thm}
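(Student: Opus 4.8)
The plan is to prove the inequality $u(L)\le c(L)/2$ by an ascending/descending trivialization of a minimal diagram, to track exactly where that bound can be tight, and then to read off the structural consequences of equality. Fix a diagram $D$ realizing the crossing number, so $c(D)=c(L)$, and choose auxiliary data: an ordering $\prec$ of the components, an orientation of each component, and a basepoint $p_k$ on each $\gamma_k$ away from the crossings. From $D$ I form two diagrams on the same underlying projection: the descending diagram $D_{\mathrm{desc}}$ (at a crossing between $\gamma_i\prec\gamma_j$ put $\gamma_i$ over, and at a self-crossing of $\gamma_k$ put the strand met first from $p_k$ over) and the ascending diagram $D_{\mathrm{asc}}$ (the opposite convention at every crossing). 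Both are standard trivial-link diagrams. Letting $B$ (resp.\ $A$) be the set of crossings where $D$ disagrees with $D_{\mathrm{desc}}$ (resp.\ $D_{\mathrm{asc}}$), changing the crossings in $B$ turns $D$ into $D_{\mathrm{desc}}$ and changing those in $A$ turns it into $D_{\mathrm{asc}}$, so $u(L)\le\min\{|A|,|B|\}$. Since $D_{\mathrm{desc}}$ and $D_{\mathrm{asc}}$ are opposite at every crossing, $A$ and $B$ partition the crossing set, whence $2u(L)\le |A|+|B|=c(L)$.

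Now assume $u(L)=c(L)/2$. The chain $c(L)/2=u(L)\le\min\{|A|,|B|\}\le (|A|+|B|)/2=c(L)/2$ collapses, forcing $|A|=|B|=c(L)/2$ for \emph{every} choice of ordering, orientations, and basepoints. First I would exploit the freedom in the basepoints: moving a single $p_k$ past one passage of a self-crossing of $\gamma_k$ flips that crossing between $A$ and $B$ and so changes $|B|$ by exactly $1$, which is impossible if $|B|$ is pinned to $c(L)/2$. Hence $\gamma_k$ has no self-crossings, and as $k$ is arbitrary every component is a simple closed curve in the plane. With all crossings now running between distinct components, I would exploit the freedom in the ordering: swapping two adjacent components $i,j$ changes $|B|$ by $a_{ij}-b_{ij}$, where $a_{ij}$ and $b_{ij}$ count the crossings of $\gamma_i\cup\gamma_j$ at which $\gamma_i$, respectively $\gamma_j$, is the overstrand. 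Constancy of $|B|$ forces $a_{ij}=b_{ij}$ for every pair, so each two-component subdiagram is \emph{balanced}: it has an even number of crossings, half of each over/under type (the crossingless case being precisely the alternative allowed in the statement, a diagram without crossings).

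The remaining and, I expect, hardest step is to upgrade \emph{balanced} to \emph{alternating} for each pair, since balance is far weaker than alternation. Here I would use two facts jointly: that $D$ is a minimal-crossing diagram, hence admits no crossing-reducing Reidemeister moves, and that the equality $u(L)=c(L)/2$ leaves no room to unlink more cheaply than the count predicts. Concretely, I would fix a pair and suppose its subdiagram $\gamma_i\cup\gamma_j$ is non-alternating; then along $\gamma_i$ there are two consecutive crossings with $\gamma_j$ of the same over/under type. The aim is to promote such a same-type adjacency to a genuine reducing configuration: taking an innermost region cut out by $\gamma_i$ and $\gamma_j$ that carries such a pair, one wants a bigon across which a Reidemeister~II move deletes two crossings, contradicting minimality of $c(L)$. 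The real difficulty is that the arc realizing the adjacency need not bound an empty bigon once the other components are present, so the argument must either restrict to the two-component sublink while controlling how the remaining components pass through the relevant region, or replace the local move by a global count showing that any same-type adjacency yields a single crossing change that lowers the unknotting number below $c(L)/2$.

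Assembling these steps, the minimal diagram $D$ consists of simple closed curves whose pairwise subdiagrams are each alternating or crossingless, which is exactly the asserted diagram $D=\gamma_1\cup\cdots\cup\gamma_\mu$. I would organize the write-up so that the inequality and the two freedom arguments (over basepoints and over orderings) are dispatched quickly and cleanly, isolating the balanced-to-alternating implication as the single technical lemma to be proved by the innermost-disk and Reidemeister analysis described above.
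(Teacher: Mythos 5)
This theorem is quoted by the paper from \cite{unbound} without proof, so there is no in-paper argument for it to compare against; I assess your proposal on its own terms and against the closely related arguments the paper does give (Lemma \ref{self}, Lemma \ref{dhandcuff}, Lemma \ref{31}). Your first two stages are correct and cleanly organized: the ascending/descending bound $u(L)\le\min\{|A|,|B|\}\le c(L)/2$, the observation that equality pins $|A|=|B|=c(L)/2$ for \emph{every} choice of ordering, orientations and basepoints, the basepoint-shifting argument that kills self-crossings, and the adjacent-transposition argument that forces each two-component subdiagram to be balanced are all sound.

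The genuine gap is the one you flag yourself: balanced does not imply alternating, and the innermost-bigon/Reidemeister~II route you sketch for closing it does not work. A same-type adjacency $c_1,c_2$ on $\gamma_i\cup\gamma_j$ gives an arc $\alpha$ of $\gamma_i$ whose interior misses $\gamma_j$, but nothing forces $\alpha$ and an arc of $\gamma_j$ to bound an empty bigon (other components, and other arcs of $\gamma_j$, may run through the region), and minimality of $D$ as a diagram of $L$ does not make the two-component subdiagram a minimal diagram of the sublink, so no crossing-reducing move is guaranteed. What closes the argument is your second alternative, and it is exactly the height-function technique this paper uses in the proofs of Lemma \ref{dhandcuff}$(3)$ and Lemma \ref{31}: since all components are now simple closed curves, a non-alternating pair with a crossing yields $c_1,c_2$ at which, say, $\gamma_i$ is over $\gamma_j$, joined by an arc $\alpha$ of $\gamma_i$ meeting $\gamma_j$ only at its endpoints. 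Take one layered trivial diagram in which $\gamma_i$ and $\gamma_j$ are adjacent in the height order with $\gamma_i$ above $\gamma_j$, and a second with the reverse order but with the arc $\alpha$ pushed up into a finger lying just above the level of $\gamma_j$; because the two levels are adjacent, the finger alters only $c_1$ and $c_2$ and retracts inside the slab between them, so both diagrams are trivial, both agree with $D$ at $c_1$ and $c_2$, and they are complementary on $C(D)\setminus\{c_1,c_2\}$. This gives $u(D)\le\dfrac{c(D)-2}{2}$, contradicting $u(D)=\dfrac{c(D)}{2}$. With that lemma in place your balanced-pairs step becomes superfluous, since non-alternating already produces the required same-type adjacency.
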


In\ \cite{uncr} Taniyama and the author showed that this inequality is not extended to spatial embeddings of planar graphs and this inequality is extended to spatial embeddings of trivializable planar graphs. Namely for any spatial embedding $f$ of a trivializable planar graph, $u(f)$ is less than or equal to half of $c(f)$. For example, a handcuff-graph and a theta-curve as illustrated in Fig.\ \ref{hanthe} are trivializable. We characterize the spatial embeddings of a handcuff-graph or a theta curve which satisfy the equality as follows.

\begin{figure}[H]
      \begin{tabular}{c}
      \begin{minipage}[b]{0.5\hsize}
        \begin{center}
          \includegraphics[height=0.3\linewidth]{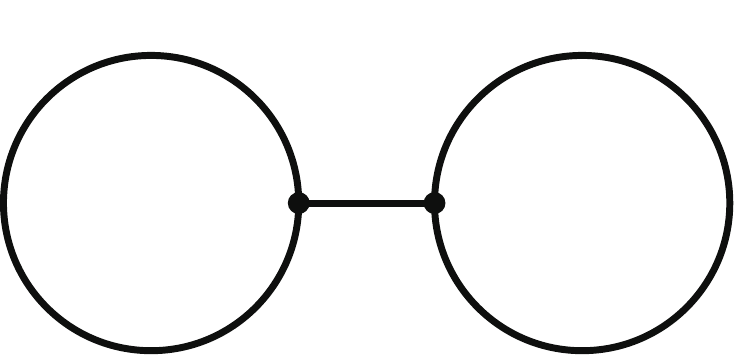}\\
          \ \\
          handcuff-graph
        \end{center}
      \end{minipage}
      \begin{minipage}[b]{0.5\hsize}
        \begin{center}
          \includegraphics[height=0.3\linewidth]{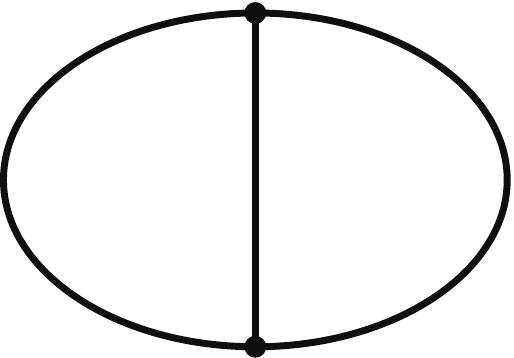}\\
          \ \\
          theta curve
        \end{center}
      \end{minipage}
      \end{tabular}
      \caption{}
      \label{hanthe}
\end{figure}

\begin{thm}\label{handcuff}
Let $G$ be a handcuff-graph and let $f$ be a spatial embedding of $G$. Then $f$ satisfies the equality $u(f)=\dfrac{c(f)}{2}$ if and only if $f$ has a diagram $D$ with the following conditions {\rm:} \\
$(1)\,$Each edge of $D$ has no self-crossings. \\
$(2)\,$All crossings of $D$ are crossings between two loops.\\
$(3)\,$Two loops of $D$ form an alternating diagram or a diagram without crossings.
\end{thm}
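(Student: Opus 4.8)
The plan is to prove both implications, taking the inequality $u(f)\le c(f)/2$ for trivializable planar graphs from \cite{uncr} as the common upper bound. The real content is therefore a matching lower bound $u(f)\ge c(f)/2$ under the stated diagrammatic conditions (the ``if'' direction), together with a structural analysis of a crossing-minimal diagram when equality is assumed (the ``only if'' direction).

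For the ``if'' direction I would argue through the linking number. Writing $\ell_1,\ell_2$ for the two loops, which are disjoint cycles of $G$, the quantity $\lambda:=\mathrm{lk}(f(\ell_1),f(\ell_2))$ is a well-defined ambient isotopy invariant that does not involve the bar; a single crossing change alters $\lambda$ by $0$ or $\pm1$, and the trivial embedding has $\lambda=0$, so $u(f)\ge|\lambda|$. Now suppose $D$ satisfies $(1)$--$(3)$. If the loops are crossingless then $D$ has no crossings at all, $f$ is trivial, and $u(f)=c(f)=0$. Otherwise conditions $(1)$ and $(2)$ make every crossing of $D$ a transverse crossing between the two simple closed curves $f(\ell_1),f(\ell_2)$, so $\lambda=\tfrac12\sum_x\varepsilon(x)$, the sum being over all crossings of $D$. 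The one extra ingredient is a lemma that in an alternating diagram of two simple closed curves all crossings carry the same sign (an oppositely signed pair would bound a bigon, which cannot occur in an alternating diagram); granting this, $|\lambda|=\tfrac12\,\#\mathrm{cr}(D)\ge\tfrac12\,c(f)$. Hence $c(f)/2\le|\lambda|\le u(f)\le c(f)/2$, which forces equality, and incidentally $c(f)=\#\mathrm{cr}(D)$.

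For the ``only if'' direction I would take a diagram $D$ with $\#\mathrm{cr}(D)=c(f)$ and mirror Taniyama's analysis of the link case \cite{unbound}, now performed pairwise over the three edges (the two loops and the bar) rather than over link components. The idea is to refine the descending/warping-degree proof of $u(f)\le c(f)/2$ so that unwanted features of $D$ produce a \emph{strict} saving. Concretely I expect to show: (i) a self-crossing on any edge lets one choose a traversal that trivializes $f$ with strictly fewer than $c(f)/2$ changes, exactly as in the ``$-1$'' improvement for knots, so equality forbids self-crossings; (ii) any crossing meeting the bar can likewise be absorbed, so equality forbids bar crossings; and (iii) once every crossing is a loop--loop crossing, a non-alternating arc along a loop yields either a crossing-reducing move or a smaller unknotting set, so the two loops must form an alternating diagram. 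Together (i)--(iii) produce a diagram with properties $(1)$--$(3)$.

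The main obstacle is the bar. Unlike the purely link-theoretic setting of \cite{unbound}, the handcuff graph has a tree edge with two distinct endpoints and no basepoint freedom of its own, so the warping/basepoint bookkeeping behind the strict savings must be redone for a spatial graph whose monotone (descending) form is trivial only because $G$ is trivializable. Establishing that bar crossings can always be eliminated within the available budget of crossing changes — that is, that condition $(2)$ is genuinely forced rather than merely convenient — is where I expect the hard work to lie; by contrast, the sign lemma used in the ``if'' direction is a smaller, self-contained point that can be checked directly for two simple closed curves.
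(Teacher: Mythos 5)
Your ``if'' direction is complete and coincides with the paper's: the linking-number bound $u(f)\ge|\mathrm{lk}(f(\ell_1),f(\ell_2))|$, together with the fact that an alternating diagram of two disjoint simple closed curves in the plane has all crossings of one sign, gives $u(f)\ge \tfrac{1}{2}c(D)\ge\tfrac{1}{2}c(f)$, and Proposition~\ref{tri12} closes the gap. The architecture of your ``only if'' direction --- pass to a minimal crossing diagram, then show that a self-crossing, a bar crossing, or a non-alternating pattern each forces a strict saving --- is also exactly the paper's (Lemmas~\ref{mini}, \ref{self} and \ref{dhandcuff}).

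The gap is that you never supply the mechanism for those strict savings, and you explicitly defer the bar-crossing step (your (ii)) as the unresolved hard part. The paper's resolution is a single uniform device that your sketch is missing: two \emph{complementary} trivializing sets of crossing changes that both avoid a prescribed collection of crossings. For (ii): once (1) holds, each loop is a simple closed curve in the plane; changing crossings so that the loop $\gamma_2$ lies entirely over $D-\gamma_2$, or entirely under it, in either case yields a trivial diagram, the two change-sets are complementary within the crossings incident to $\gamma_2$, and neither touches the $c(\gamma_1,e)$ crossings between the other loop and the bar. Hence $u(D)\le\tfrac{1}{2}\bigl(c(D)-c(\gamma_1,e)\bigr)$, which forces $c(\gamma_1,e)=0$, and symmetrically $c(\gamma_2,e)=0$. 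For (i) the paper smooths the self-crossing to split off a knot component $\gamma_1$, pushes $\gamma_1$ entirely over or entirely under the rest (complementary, and sparing the smoothed crossing itself), then applies $u(\gamma_2)\le c(\gamma_2)/2$ to what remains, giving $u(D)\le\tfrac{1}{2}(c(D)-1)$. For (iii) a non-alternating arc yields two over-crossings $c_1,c_2$ spared by both members of a complementary pair of trivializing sets ($A$ = crossings where $\gamma_1$ is under, and $B=C(D)\setminus(A\cup\{c_1,c_2\})$, the latter trivializing by the height-function argument), giving $u(D)\le\tfrac{1}{2}(c(D)-2)$. Without these counting arguments, conditions (1)--(3) are only conjectured to be forced, not proved.
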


\begin{thm}\label{theta}
Let $G$ be a theta curve and let $f$ be a spatial embedding of $G$. Then $f$ satisfies the equality $u(f)=\dfrac{c(f)}{2}$ if and only if $f$ is trivial.
\end{thm}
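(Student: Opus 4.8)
The plan is to prove the two implications separately, with essentially all of the work concentrated in the ``only if'' direction. The ``if'' direction is immediate: if $f$ is trivial then it has a diagram with no crossings, so $c(f)=0$ and $u(f)=0$, whence $u(f)=c(f)/2$.

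For the ``only if'' direction I would argue the contrapositive, showing that a nontrivial spatial embedding of a theta curve satisfies the strict inequality $u(f)<c(f)/2$. First, since $u(f)\le c(f)/2$ is known from \cite{uncr} and $u(f)$ is an integer, the inequality is automatically strict when $c(f)$ is odd; so I may assume $c(f)$ is even and fix a diagram $D$ realizing $c(f)$. I would then set up the two pieces of bookkeeping that control the problem: the three constituent knots obtained from the three pairs of edges, and the Eulerian traversals of the theta graph (each edge once, $u\to v\to u\to v$) that are used in \cite{uncr} to build descending, hence trivial, diagrams. Changing the ``bad'' crossings of a descending traversal trivializes $f$, so $u(f)$ is bounded by the minimal number of bad crossings taken over all such traversals.

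To locate the slack I would mirror the strategy behind Theorem $1.5\,(2)$ of \cite{unbound} and behind Theorem \ref{handcuff}: analyze the equality case by showing that an equality diagram must satisfy the theta-analogues of conditions $(1)$--$(3)$, namely that no edge has a self-crossing, that every crossing is a crossing between two distinct edges, and that each pair of edges forms an alternating or crossingless subdiagram. The main tool for eliminating self-crossings is the strict knot inequality $2u(K)\le c(K)-1$ recalled in the introduction: a self-crossing on an edge should let one replace a descending change by a cheaper unknotting of an induced constituent knot, strictly beating $c(f)/2$.

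The hard part will be the final, genuinely theta-specific step: showing that the alternating conditions on the three pairs cannot be met simultaneously unless there are no crossings at all. This is exactly where the theta curve departs from the handcuff graph. For a handcuff the two loops form a genuine two-component link, to which Taniyama's link characterization applies and for which nontrivial alternating equality diagrams exist; for a theta the three edges pairwise cobound cycles that share an edge and meet at two trivalent vertices, so the three alternating patterns over-determine the over/under data and, I expect, force every crossing to be removable, contradicting minimality of $D$ unless $c(f)=0$. I anticipate the most delicate point to be reconciling this with nontrivial theta curves all three of whose constituent knots are trivial (such as Kinoshita's curve), where no constituent knot supplies slack; handling these should require extracting the saving directly from the vertex structure, using that the base point of each traversal is pinned at a trivalent vertex rather than free to slide as it is on a knot.
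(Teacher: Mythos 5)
Your ``if'' direction and your overall reduction (pass to a minimal crossing diagram, note the inequality is automatically strict for odd $c(f)$, then eliminate self-crossings and analyze crossings between distinct edges) match the paper's skeleton. But the decisive step is missing: you defer the genuinely theta-specific part to ``I expect'' that the three pairwise-alternating conditions over-determine the over/under data and ``force every crossing to be removable.'' That is not an argument, and it is also a detour: the paper never needs conditions $(2)$ and $(3)$ for the theta curve at all. Its Lemma \ref{dtheta} shows directly that a \emph{single} crossing $c$ between two distinct edges already yields $u(D)\le\frac{c(D)-1}{2}$. The mechanism is exactly the ``vertex structure'' saving you gesture at in your last sentence but do not supply: subdivide the projection at $c$ to get vertices $v_1$ (over) and $v_1'$ (under); since every edge of a theta curve joins the same two vertices $v$ and $u$, one can choose a spanning tree in which $d_T(v,v_1)=d_T(v,v_1')=1$, so the height function $h(t)=-d_T(t,v)$ can be perturbed to put the over-arc above the under-arc at $c$, producing a descending (hence trivial) embedding $f$ with diagram $D'$; the negated height $h'=-h$, re-perturbed to keep $v_1$ above $v_1'$, produces a second trivial diagram $D''$ that differs from $D'$ at every crossing \emph{except} $c$. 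The two trivializations are complementary on $C(D)\setminus\{c\}$, giving $u(D)\le\min\{|A|,|(C(D)-\{c\})-A|\}\le\frac{c(D)-1}{2}$, contradicting $u(D)=\frac{c(D)}{2}$ unless $c(D)=0$. Without this (or an equivalent) argument, your proof does not close, and in particular your worry about Kinoshita-type curves with all constituent knots trivial is never resolved.

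A secondary problem is your proposed tool for killing self-crossings. You want to invoke $2u(K)\le c(K)-1$ for a constituent knot, but a self-crossing of one edge of the theta diagram does not hand you a constituent knot with positive crossing number in a controlled way, and unknotting a constituent knot does not trivialize the spatial theta curve. The paper's Lemma \ref{self} instead smooths the self-crossing to split off a knot component $\gamma_1$, lifts $\gamma_1$ entirely above or entirely below the rest (two complementary options on the crossings touching $\gamma_1$, neither of which needs to touch the smoothed crossing itself), and spends at most $\frac{c(D)-c(\gamma_2)-1}{2}$ changes before recursing on $\gamma_2$. You would need to replace your constituent-knot appeal with something of this kind.
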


We note that unknotting numbers of spatial embeddings of a theta curve is studied in \cite{ob}.

A \textit{handlebody-knot} is an embedded handlebody in the 3-dimensional Euclidean space $\mathbb{R}^3$, which is introduced by Ishii in \cite{handle}. Two handlebody-knots $H_1$ and $H_2$ are equivalent if there is an
orientation-preserving homeomorphism $h$ of $\mathbb{R}^3$ with $h(H_1) = H_2$. A \textit{spine} of a handlebody-knot $H$ is a spatial graph whose regular neightborhood is $H$. In this paper, we assume that spines have no degree 1 verticies. Any handlebody-knot $H$ can be represented by a spatial trivalent graph that is a spine of $H$. In particular, genus 2 handlebody-knot can be represented by a spatial embedding of a handcuff-graph or a theta curve. A \textit{crossing change} of a handlebody-knot $H$ is that of a spatial trivalent graph representing $H$. In\ \cite{unqc} Iwaliri showed that a crossing change of a handlebody-knot is an unknotting operation and give lower bounds of the unknotting numbers for handlebody-knots by the numbers of some finite Alexander quandle colorings. 

We have the following well-known relation between unknotting number and crossing number of classical knots.
\begin{prop} \label{12c1k}
Let $K$ be a nontrivial knot. Then $u(K) \leq \dfrac{c(K)-1}{2}$.
\end{prop}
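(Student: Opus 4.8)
The plan is to start from a diagram $D$ of $K$ with exactly $c(K)$ crossings and to exhibit an explicit sequence of crossing changes turning $D$ into the trivial knot, counting the changes carefully. The basic tool is the standard fact that a descending diagram (one in which, traversing the knot from a fixed base point in a fixed direction, every crossing is first met as an over-crossing) represents the trivial knot, and the same holds for ascending diagrams. Thus, once a base point and an orientation are fixed, $D$ can be unknotted by changing exactly those crossings that are first met as under-crossings, which I will call the \emph{bad} crossings for that choice. Hence $u(K)$ is bounded above by the number of bad crossings, minimized over all choices of base point and direction.

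First I would record the basic symmetry: for a fixed base point, reversing the direction of traversal interchanges, at every crossing, which of the two passages comes first, so a crossing is bad for one direction if and only if it is good for the other. Consequently the numbers of bad crossings for the two directions sum to $c(K)$, which already yields the weak bound $u(K) \le c(K)/2$.

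The key step, and the real content beyond this naive bound, is to gain the extra $-1$. For this I would let the base point range over the $2c(K)$ arcs of $D$ and track how the number of bad crossings changes. Moving the base point forward across a single passage point flips the good/bad status of exactly the one crossing to which that passage belongs, so the bad-count increases by $1$ across each over-passage and decreases by $1$ across each under-passage. Since $K$ is nontrivial we have $c(K) \ge 1$, so at least one over-passage (and one under-passage) occurs; therefore the bad-count is a nonconstant function of the base point, and in particular some base point yields a bad-count $v \neq c(K)/2$. For that base point, choosing the better of the two directions unknots $D$ with $\min(v,\, c(K)-v) < c(K)/2$ crossing changes, so $u(K) < c(K)/2$. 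As $u(K)$ is an integer, $2u(K) < c(K)$ forces $2u(K) \le c(K)-1$, i.e. $u(K) \le (c(K)-1)/2$, as claimed.

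The main obstacle is precisely this last improvement: the monotone-step argument must be arranged so that nontriviality (through $c(K) \ge 1$) forces the bad-count to be nonconstant, which is exactly what rules out the borderline situation in which every base point splits the crossings evenly into $c(K)/2$ bad and $c(K)/2$ good. I would take care to verify that crossing a single passage point affects only its one associated crossing, so that the step size is exactly $\pm 1$, and to confirm the final integer rounding works uniformly in both parities of $c(K)$.
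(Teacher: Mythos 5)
Your proof is correct. The descending/ascending diagram facts you invoke are standard, the observation that moving the base point past one passage point flips the status of exactly one crossing (hence the bad-count is a $\pm 1$-step function that is nonconstant once $c(K)\geq 1$) is sound, and the final parity step $2u(K)<c(K)\Rightarrow 2u(K)\leq c(K)-1$ is fine. Note, however, that the paper does not prove Proposition \ref{12c1k} directly (it is quoted as well known); the technique it actually uses to get the analogous ``$-1$'' for handlebody-knots, in Lemma \ref{31} and in Lemma \ref{self}, is different from yours. There, one \emph{designates} a crossing $c_1$ in advance and builds a height function (via a subdivided graph and a spanning tree with $d_T(v,v_1)=d_T(v,v_1')$) so that \emph{both} of the two complementary unknotting sets of crossing changes avoid $c_1$; this immediately gives $\min\{|A|,|(C(D)\setminus\{c_1\})\setminus A|\}\leq \frac{c(D)-1}{2}$. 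Your argument instead fixes the descending construction and varies the base point, using a discrete intermediate-value observation to find a base point whose bad-count differs from $c(K)/2$. For knots your route is more elementary and self-contained, since a knot diagram has a canonical traversal; the paper's route is the one that generalizes to trivalent graphs, where there is no single traversal and the spanning-tree height function is needed to make ``descending'' meaningful. Both arguments rest on the same underlying fact that descending-type diagrams are trivial, so the difference is in how the saving of one crossing is extracted, not in the core mechanism.
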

In \cite{unbound} Taniyama characterized the knots which satisfy the equality as follows.
\begin{thm} \cite[Theorem1.4\,(2)]{unbound} \label{12c2k}
Let $K$ be a nontrivial knot that satisfies the equality $u(K)=\dfrac{c(K)-1}{2}$.   Then $K$ is a $(2,\ p)$-torus knot for some odd number $p \neq \pm 1$.
\end{thm}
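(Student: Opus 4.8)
The plan is to work with a diagram $D$ of $K$ realizing the crossing number, so that $D$ has exactly $c=c(K)$ crossings, and to exploit the warping degree (ascending/descending) method for unknotting. First I would note that equality forces $c$ to be odd: since $u(K)$ is a nonnegative integer and $u(K)=\frac{c-1}{2}$, the number $c-1$ must be even. Then I would recall the proof of $u(K)\le\frac{c-1}{2}$ in the precise form I need. Fix an orientation and a base point $p$ on $D$ and traverse the knot; at each crossing exactly one of its two passages is an over-passage and one is an under-passage. Let $d(p)$ be the number of crossings whose first passage (counting from $p$) is an under-passage. Changing these $d(p)$ crossings makes $D$ descending, hence trivial, so $u(K)\le d(p)$; applying this to the ascending diagram gives $u(K)\le c-d(p)$. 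Hence $u(K)\le\min_p\min(d(p),c-d(p))\le\frac{c-1}{2}$.

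The heart of the argument is the equality analysis. If $u(K)=\frac{c-1}{2}$ then every inequality above is sharp, so for every base point $p$ one has $d(p)\in\{\frac{c-1}{2},\frac{c+1}{2}\}$. Next I would track how $d(p)$ changes as $p$ is pushed forward past one passage. Because the two passages at any crossing are of opposite type, moving $p$ past a passage always flips the good/bad status of exactly one crossing, so $d(p)$ changes by exactly $\pm 1$ at each of the $2c$ steps, increasing by $1$ when the passage crossed is an over-passage and decreasing by $1$ when it is an under-passage. A cyclic sequence that takes only the two adjacent values $\frac{c-1}{2},\frac{c+1}{2}$ and moves by $\pm 1$ must strictly alternate; therefore the over/under passages read off along the knot alternate. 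In other words $D$ is an alternating diagram, and being minimal it is reduced, so $K$ is an alternating knot presented by a reduced alternating diagram with an odd number of crossings.

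I would then dispose of composite knots cheaply. Crossing number is additive under connected sum for alternating knots and unknotting number is subadditive, so if $K=K_1\# K_2$ nontrivially then $u(K)\le u(K_1)+u(K_2)\le\frac{c(K_1)-1}{2}+\frac{c(K_2)-1}{2}=\frac{c(K)-2}{2}<\frac{c(K)-1}{2}$, contradicting equality. Hence $K$ is prime.

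The remaining and hardest step is to show that a prime reduced alternating diagram with $u(K)=\frac{c-1}{2}$ must be the standard diagram of a $(2,p)$-torus knot, equivalently that its Tait (checkerboard) graph is a single cycle $C_c$. Here I would argue that any extra structure — a Seifert circle beyond the first two, equivalently a vertex of degree $\ge 3$ in the Tait graph, equivalently more than one twist region — permits an unknotting sequence of strictly fewer than $\frac{c-1}{2}$ crossing changes, contradicting equality; the cleanest packaging of the endpoint is the classical fact that for a nontrivial alternating knot the determinant (the number of spanning trees of the Tait graph) satisfies $\det(K)\ge c(K)$ with equality exactly for $(2,p)$-torus knots, so it suffices to force the Tait graph to be a cycle. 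I expect this final identification to be the main obstacle: unlike the signature or genus bounds, which are too weak here (the knot $7_4$ already has $u>g$), it must genuinely use the unknotting hypothesis to rule out every reduced alternating diagram except the one-twist-region diagram, and making precise the claim that extra twist regions always admit a shorter unknotting is the delicate part of the proof.
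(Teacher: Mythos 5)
Your warping-degree analysis is sound as far as it goes: equality does force $c(K)$ to be odd, forces $d(p)\in\bigl\{\frac{c-1}{2},\frac{c+1}{2}\bigr\}$ for every base point, hence forces a minimal diagram to be alternating, and the connected-sum reduction is fine. But the argument stops exactly where the theorem lives. The determinant inequality $\det(K)\ge c(K)$ (equality iff $(2,p)$-torus) that you offer as the ``cleanest packaging'' has no bridge to the hypothesis: there is no inequality relating $u(K)$ to $\det(K)$ that converts ``the Tait graph is not a cycle'' into ``$u(K)\le\frac{c-2}{2}$,'' and you explicitly concede that the claim that extra twist regions admit a shorter unknotting is left unproved. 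As written, you have only shown that $K$ admits a reduced, prime, alternating minimal diagram with an odd number of crossings, which is far from the conclusion: $5_2$, $7_4$, and many others pass all of your filters.

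The missing idea --- which is also the engine of Subclaim 4 in Lemma \ref{d12c2} of this paper, where the same endgame is run for handlebody-knots --- is combinatorial rather than polynomial. Call two crossings of a knot projection \emph{parallel} if their four passages occur in the Gauss sequence in nested rather than interleaved order. The result quoted as \cite{proje} says that a knot projection in which no two crossings are parallel is the standard $(2,p)$-torus projection. So if the underlying projection of $D$ is not standard, it has a parallel pair $c_1,c_2$, and the nesting lets you choose a base point $v$ so that the two passages of $c_1$ are equidistant from $v$ along the knot and likewise, simultaneously, for $c_2$ (this is the condition $d_T(v,v_i)=d_T(v,v_i')$ of Lemma \ref{31} specialized to $G=S^1$; for an interleaved pair no such $v$ exists). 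The descending and ascending resolutions from such a $v$ both leave $c_1$ and $c_2$ untouched and are complementary on the remaining $c-2$ crossings, giving $u(D)\le\frac{c(D)-2}{2}$ and contradicting equality. Once the projection is forced to be standard, minimality of the crossing number forces the alternating over/under assignment and $K$ is a $(2,p)$-torus knot. Without this step, or a substitute for it, your proposal does not prove the theorem. (Note also that the paper does not reprove this statement; it is quoted from \cite{unbound}, and only its handlebody analogue is proved here.)
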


In this paper, as an extension of Proposition \ref{12c1k}, we show the following theorem.

\begin{thm}\label{12c1}
Let $H$ be a non-trivial handlebody-knot.\ Then $u(H) \leq \dfrac{c(H)-1}{2}$.
\end{thm}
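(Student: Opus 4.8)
The plan is to imitate the classical proof of Proposition \ref{12c1k} for knots, carried out on a spine of $H$ via descending diagrams. First I would fix a spine $G$ of $H$ together with a diagram $D$ of $G$ realizing the crossing number, so that $D$ has exactly $c(H)$ crossings; note that $c(G)=c(H)$ and, since any crossing changes that trivialize $G$ as a spatial graph also trivialize $H$, one has $u(H)\le u(G)$. Recall that a crossing change on $D$ is a crossing change of $H$, and that (as in the classical case, and as used in \cite{uncr}) a \emph{descending} diagram of a spatial graph---one in which, after choosing a total order on the edges and an orientation of each edge, the strand appearing earlier always passes over the strand appearing later---represents the trivial handlebody-knot.

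Fixing such an order and orientation, let $a$ denote the number of crossings of $D$ at which the earlier strand passes \emph{under}. Changing exactly these $a$ crossings turns $D$ into a descending diagram, so $u(H)\le a$. Reversing the total order and all the orientations replaces $a$ by $c(H)-a$, giving $u(H)\le c(H)-a$ as well, and hence the weak bound $u(H)\le c(H)/2$. Since $H$ is nontrivial, $D$ has at least one crossing.

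To gain the extra $-1$ it suffices to choose the order and orientation so that $a\neq c(H)/2$; then $\min\{a,\,c(H)-a\}\le (c(H)-1)/2$ and we are done, so the target can only fail when $c(H)$ is even and $u(H)=c(H)/2$ for every admissible choice. As in the knot case, the strategy is to vary the chosen data and track how $a$ changes: sliding a base point along a cycle of the spine, or reversing the orientation of a single edge, alters the ``earlier strand'' designation at the affected crossings, and one argues that $a$ cannot remain constantly equal to $c(H)/2$ through all such moves.

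The main obstacle is precisely this balancing step. For a single knot the count $a$ changes by exactly $\pm1$ under a base-point slide, which instantly forbids $a\equiv c(H)/2$; for a spatial graph the natural moves can flip the status of several crossings at once, so the $\pm1$ argument must be localized to a suitable cycle of the spine passing through a chosen crossing. I expect to dispatch the extremal case $u(H)=c(H)/2$ directly: with $D$ minimal one then forces $u(G)=c(G)/2$, so when $G$ is a handcuff-graph or a theta curve Theorems \ref{handcuff} and \ref{theta} pin down the shape of $D$, and one checks that the resulting handlebody-knot is in fact trivial, contradicting nontriviality. For higher genus the same localized base-point argument, applied to an innermost cycle carrying a crossing, should supply a choice with $a\neq c(H)/2$ and thereby close the remaining cases.
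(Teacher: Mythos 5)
Your setup (descending diagrams of a spine, the complementary count $a$ versus $c(H)-a$, hence $u(H)\le c(H)/2$) matches the paper's starting point, but the proof stops exactly at the crux. The entire content of the theorem is the ``balancing step'' you flag, and you leave it as ``I expect to dispatch'' and ``should supply a choice'': no argument is given that some admissible choice of order/orientation/basepoint yields $a\neq c(H)/2$, and you yourself note that the knot-theoretic $\pm1$ basepoint-slide argument does not transfer to graphs. The paper resolves this differently and concretely (Lemma \ref{31} with $k=1$): fix \emph{one} crossing $c_1$ of a minimal diagram, subdivide the spine there into an over-vertex $v_1$ and an under-vertex $v_1'$, choose a spanning tree $T$ through both and a base vertex $v$ with $d_T(v,v_1)=d_T(v,v_1')$; the height function $h=-d_T(v,\cdot)$ \emph{and its negative} $-h$ can each be perturbed so that $v_1$ stays above $v_1'$. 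This produces two trivial handlebody-knot diagrams whose over/under data are complementary on $C(D)\setminus\{c_1\}$ and which both agree with $D$ at $c_1$, so $u(D)\le\min\{|A|,|C(D)\setminus\{c_1\}-A|\}\le\frac{c(D)-1}{2}$. In other words, instead of perturbing the count $a$ away from $c/2$, one exhibits a single crossing that never needs to be changed in either of the two complementary trivializations; that is the missing idea.

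Your proposed fallback for the extremal genus-$2$ case is also flawed in two ways. First, $u(H)=c(H)/2$ only forces the \emph{handlebody-knot} diagram unknotting number of a minimal diagram to equal $c/2$; it does not force the \emph{spatial-graph} unknotting number of the spine to equal $c/2$, so Theorems \ref{handcuff} and \ref{theta} do not apply directly (the paper's Remark \ref{ge2} instead works at the diagram level with Lemmas \ref{dhandcuff} and \ref{dtheta}, using an IH-move to pass from a handcuff diagram to a theta-curve diagram with the same crossing number). Second, the claim that the extremal handcuff diagrams represent trivial handlebody-knots is false: an alternating two-loop handcuff diagram with four or more crossings (e.g.\ a tubed $(2,4)$-torus link) represents a nontrivial handlebody-knot, so the intended contradiction with nontriviality does not materialize. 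For genus $\ge 3$ no argument is offered at all. As it stands the proposal is a correct reduction to an unproved (and, along the sketched route, partly incorrect) claim rather than a proof.
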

The spine of genus 1 handlebody-knot is a classical knot. Therefore Theorem \ref{12c1} is an extention of Proposition \ref{12c1k}. It follows from Theorem \ref{handcuff} and Theorem \ref{theta} that for any non-trivial handlebody-knot $H$ with genus 2 twice the unknotting number of $H$ is less than or equal to the crossing number of $H$ minus one \ (see section 4). 

It follows from Theorem \ref{12c2k} that genus 1 handlebody-knot $H$ with $u(H) = \dfrac{c(H)-1}{2}$ is a regular neighborhood of a $(2,\ p)-$torus knot. We also characterize genus $n \geq 2$ handlebody-knots which satisfy the equality as follows.
\begin{thm}\label{12c2}
Let $n\geq 2$ and let $H$ be a nontrivial genus $n$ handlebody-knot that satisfies the equality $u(H)=\dfrac{c(H)-1}{2}$.\,Then $H$ is a handlebody-knot represented by $D_3$ or $D_{-3}$ illustrated in Fig.\ \ref{d3}.
\end{thm}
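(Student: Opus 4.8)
The plan is to fix a spine diagram $D$ of $H$ realizing the crossing number, so that $c(D)=c(H)$, and to revisit the unknotting argument behind Theorem~\ref{12c1} while recording precisely when each estimate is sharp. In that argument one orients the spine, chooses base points on the edges, and turns $D$ into a descending (hence trivial) diagram by crossing changes; the bound $u(H)\le\frac{c(H)-1}{2}$ arises from saving at least one crossing change relative to the naive count $\frac{c(D)}{2}$. Under the hypothesis $u(H)=\frac{c(H)-1}{2}$ this saving is exactly one and every intermediate inequality is forced to be an equality. From this sharpness I would first extract the local conclusions of Theorem~\ref{handcuff}: after reducing $D$, no edge has a self-crossing, the essential crossings occur only between pairs of loops, and each such pair bounds an alternating subdiagram. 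Theorem~\ref{theta} then serves as an exclusion principle, since any theta-shaped sub-spine carrying essential crossings would itself realize the equality $u(f)=\frac{c(f)}{2}$ and hence be trivial; thus no essential crossing is of theta type, and the essential part of $D$ is of handcuff type.

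Second, I would reduce the genus. The asserted conclusion is a genus~$2$ handlebody-knot, so for $n\ge 3$ I must rule out equality. The idea is that a genus~$n$ spine contains loops and edges beyond those of a handcuff graph; once all essential crossings have been concentrated into alternating two-loop pieces, each handle past the second contributes either an extra crossing that increases $c(H)$ without a correspondingly forced crossing change, or an additional spine move that unknots $H$ with strictly fewer than $\frac{c(H)-1}{2}$ changes. Quantifying that every handle beyond the second strictly improves the unknotting efficiency yields $u(H)<\frac{c(H)-1}{2}$ for $n\ge 3$, leaving only $n=2$.

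For $n=2$ the spine is a handcuff graph, with its two loops forming an alternating link by the first step. Here parity is decisive: equality makes $c(H)=2u(H)+1$ odd, while two closed loops always meet in an even number of crossings, so an alternating link between the loops accounts for an even part of $c(H)$. The one remaining odd crossing must come from a single essential interaction of the connecting edge, and tracking the handlebody-knot equivalence (ambient isotopy together with IH-moves between spines) shows that, up to this one feature, the only reduced alternating configurations compatible with minimality are the two mirror diagrams $D_3$ and $D_{-3}$ of Fig.~\ref{d3}; these play the role for genus~$2$ that the $(2,\pm 3)$-torus knots play for genus~$1$ in Theorem~\ref{12c2k}. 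A direct comparison of $c$ and $u$ for these two candidates with the bound completes the identification.

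The main obstacle is the genus reduction. Because $c(H)$ and $u(H)$ are defined by minimizing over all spines and all diagrams, the minimal diagram I fix need not display an extra handle efficiently, so I cannot simply erase a loop; making rigorous that each handle beyond the second strictly lowers the unknotting cost, while controlling the interplay between crossing changes, planar isotopies, and the IH-moves relating different spines of the same $H$, is where the real difficulty lies. Once the alternating-plus-parity normal form is in place, the concluding identification with $D_3$ or $D_{-3}$ is a finite verification.
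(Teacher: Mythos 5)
Your proposal misreads the conclusion of the theorem and, as a consequence, sets out to prove something false. The diagrams $D_3$ and $D_{-3}$ in the statement are genus-$n$ diagrams: they represent $K\ \#\ O_{n-1}$, the disk sum of the genus~$1$ handlebody-knot whose spine is a trefoil with a trivial genus $n-1$ handlebody-knot. Equality $u(H)=\frac{c(H)-1}{2}$ holds for these for \emph{every} $n\geq 2$ (indeed $u=1$ and $c=3$), so your second step --- ``for $n\ge 3$ I must rule out equality'' by showing each extra handle strictly improves unknotting efficiency --- cannot succeed; you yourself flag this genus reduction as the main obstacle, and it is an obstacle because the claim is false. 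Your structural picture of the extremal diagram is also wrong. You import the conditions of Theorem~\ref{handcuff} (no self-crossings, all crossings between two loops, alternating), but those characterize the equality $u=\frac{c}{2}$, not $u=\frac{c-1}{2}$; Lemma~\ref{self} shows that a diagram with a self-crossing already satisfies $u(D)\le\frac{c(D)-1}{2}$, and in fact the paper proves (Lemma~\ref{d12c2}) that in the extremal case \emph{all} crossings are self-crossings of a single cycle $\gamma$, which is a reduced alternating diagram of a $(2,p)$-torus knot. There is no ``alternating link between two loops plus one odd crossing of the connecting edge''; the parity bookkeeping you propose does not correspond to any actual configuration. Likewise, Theorem~\ref{theta} cannot serve as the exclusion principle you describe, since the sub-spines in question do not satisfy $u=\frac{c}{2}$.

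The ingredients actually needed, and absent from your outline, are: (i) Lemma~\ref{31}, the subdivided-graph/spanning-tree/height-function argument that yields $u(D)\le\frac{c(D)-k}{2}$ whenever $k$ crossings can be made height-balanced, which drives the case analysis showing all crossings lie on one cycle and that no two of them are parallel; (ii) Taniyama's Theorem~\ref{12c2k} to identify that cycle as a reduced alternating $(2,p)$-torus diagram; and (iii) Proposition~\ref{2br}, the unknotting-tunnel argument showing $u(K\ \#\ O_{n-1})=1$ for $2$-bridge spines, which is what eliminates $|p|\ge 5$ and leaves only $D_3$ and $D_{-3}$. Without these, the identification of the extremal diagrams cannot be completed.
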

\begin{figure}[H]
      \begin{tabular}{c}
      \begin{minipage}{0.5\hsize}
        \begin{center}
          \includegraphics[width=0.6\linewidth]{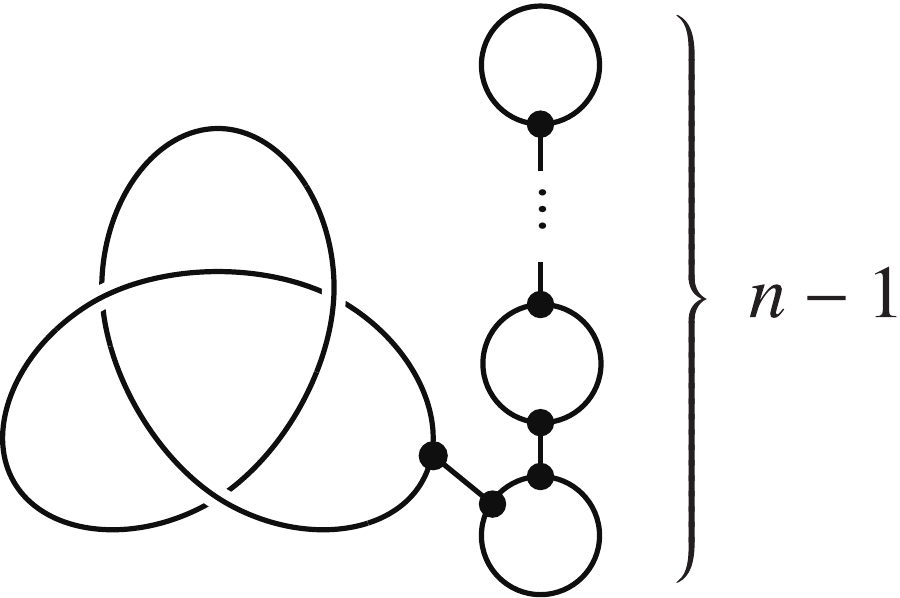}
        \end{center}
      \end{minipage}
      \begin{minipage}{0.5\hsize}
        \begin{center}
          \includegraphics[width=0.6\linewidth]{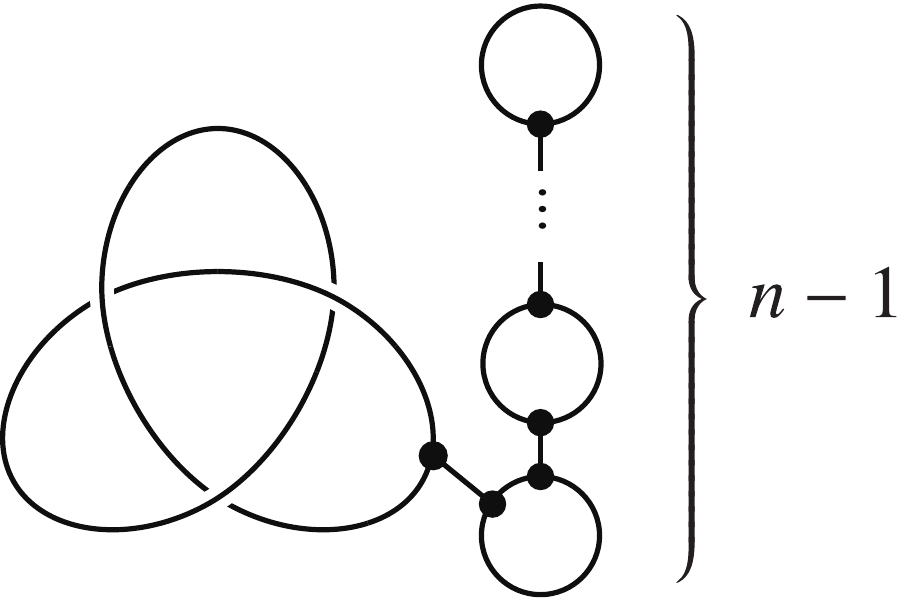}
        \end{center}
      \end{minipage}
      \end{tabular}
      \caption{}
      \label{d3}
\end{figure}

This paper consists of five sections. In section 2 we review trivializability of planar graphs and  inequalities between unknotting numbers and crossing numbers of spatial embeddings of planar graphs. In section 3 we introduce unknotting number of handlebody-knots. In section 4 we give proofs of Theorem \ref{handcuff} and Theorem \ref{theta}. In section 5 we give proofs of Theorem \ref{12c1} and Theorem \ref{12c2}.

\section{UNKNOTTING NUMBERS AND CROSSING NUMBERS OF SPATIAL EMBEDDINGS OF PLANAR GRAPHS}

Let $G$ be a planar graph. A \textit{spatial embedding} of $G$ is an embedding $f : G\rightarrow \mathbb{R}^3$. Its image $f(G)$ is said to be a \textit{spatial graph}. Let $\pi: \mathbb{R}^3 \rightarrow \mathbb{R}^2$ be a natural projection defined by $\pi(x,y, z) = (x, y)$. Let $SE(G)$ be the set of all spatial embeddings of $G$. A \textit{regular projection} of $G$ is a continuous map $\tilde{f} : G \rightarrow \mathbb{R}^2$ whose double points are only finitely many transversal double points. Such a double point is said to be a \textit{crossing point} or simply a \textit{crossing}.  If we give over/under informations at each crossing points of a regular projection $\tilde{f}$ of $G$, then $\tilde{f}$ together with the over/under informations represents a spatial embedding $f:G \rightarrow \mathbb{R}^3$ such that $\tilde{f} = \pi \circ f$. Such a regular projection together with the over/under informations is said to be a \textit{diagram} of $f(G)$. Then we say that $f$ is \textit{obtained from} $\tilde{f}$. We also call $\tilde{f}$ a \textit{regular projection} of $f(G)$. For a diagram $D$ of a spatial embedding, the set of all crossings of $D$ is denoted by $\textit{C}(D)$. The number of crossings of $D$ is denoted by $c(D) = |\textit{C}(D)|$.

An element $f \in SE(G)$ is said to be \textit{trivial}, if it is ambient isotopic to  $t \in SE(G)$ such that $t(G) \subset \mathbb{R}^2$. 
Any spatial embedding of a planar graph can be transformed into trivial one by crossing changes. Therefore unknotting number is naturally extended to spatial embeddings of planar graphs as follows. For $f \in SE(G)$, the unknotting number $u(f)$ is defined to be the minimal number of crossing changes from $f$ to a trivial embedding of $G$. The \textit{crossing number} $c(f)$ is defined to be the minimal number of crossing points among all diagrams of spatial embeddings that are ambient isotopic to $f$. 

For any link $L$, $L$ satisfies the inequality $u(L) \leq \dfrac{c(L)}{2}$. But this is not extended for spatial embeddings of planar graph, namely there are a planar graph $G$ and a spatial embedding $f$ of $G$ such that $u(f) > \dfrac{c(f)}{2}$. Let $P_3$ the cube graph and $f_3 \in SE(P_3)$ a spatial embedding of $P_3$ as illustrated in Fig.\ \ref{f3p3}. The spatial graph $f_3(P_3)$ contains three Hopf-links and one crossing change of edges of $f_3(P_3)$ unknot at most two of them\ (See Fig.\ \ref{hopf3}). Then $u(f_3) \geq 2$. Since $f_3(P_3)$ contains a trefoil whose crossing number is 3, $c(f_3)=3$ and $u(f_3) > \dfrac{c(f_3)}{2}$ \cite{uncr}. 


\begin{figure}[H]
\centering
\includegraphics[width=0.8\linewidth]{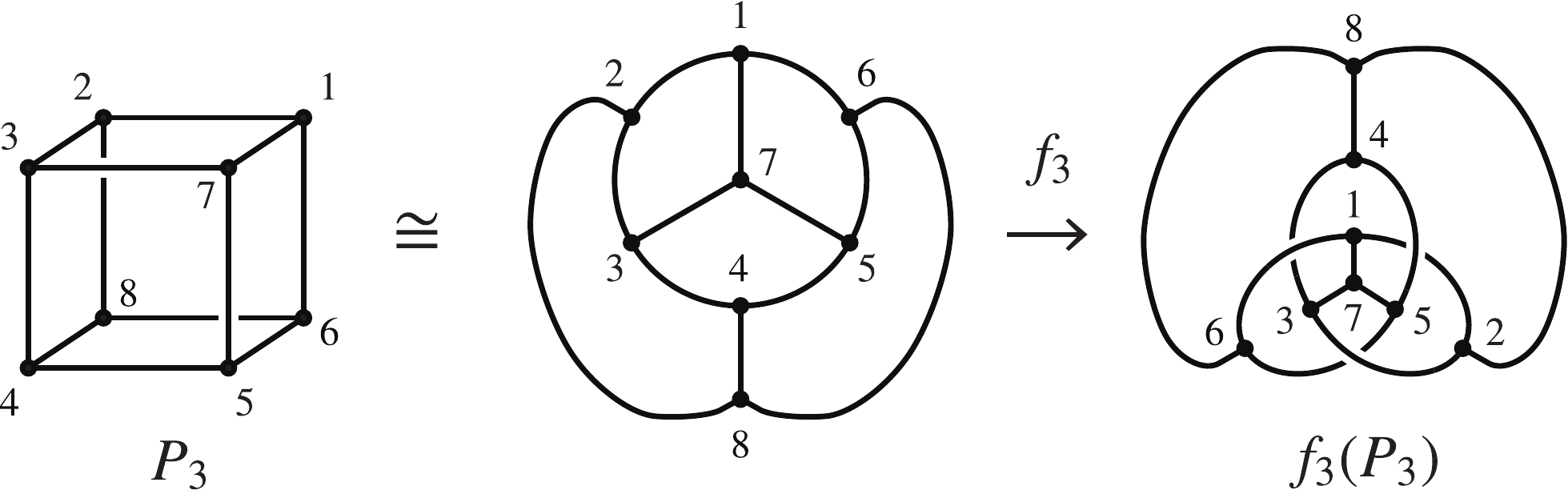}\\
\caption{}
\label{f3p3}
\end{figure}

\begin{figure}[H]
\centering
\includegraphics[width=0.55\linewidth]{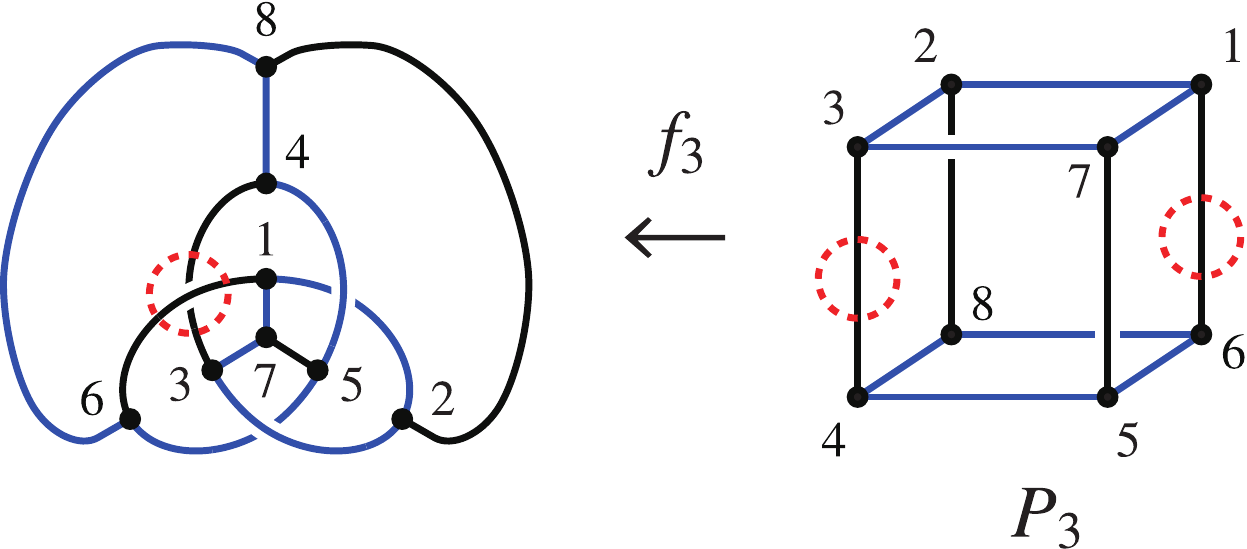}
\caption{}
\label{hopf3}
\end{figure}

Now we review the reason why it happens for some planar graphs. The key point of the proof of $u(L) \leq \dfrac{c(L)}{2}$ for a link $L$ is that any link diagram can be transformed into a trivial link diagram by changing over/under informations at some crossings of the diagram. Let $D$ be a minimal crossing diagram of $L$. Let $A$ be a subset of $C(D)$ such that changing over/under informations at all crossings in $A$ turns $D$ to a diagram $T_1$ of a trivial link. Let $T_2$ be a diagram that is obtained from $T_1$ by changing over/under informations at all crossings. A mirror image of a trivial link is also  trivial.  Thus $T_2$ is a diagram of a trivial link. Note that $T_2$ is obtained from $D$ by changing over/under informations at all crossings in $C(D)-A$. Therefore we have
$$u(L) \leq u(D) \leq \min  \{|A|,|C(D)-A|\} \leq \dfrac{c(D)}{2}=\dfrac{c(L)}{2}$$

On the other hand, all diagrams obtained from $\pi \circ f_3(P_3)$ (Fig.\ \ref{knotted}) represent non-trivial spatial graphs since each of the spatial graphs obtained from these diagrams contains at least one Hopf-link. A regular projection $\tilde{f}$ of a planar graph $G$ is said to be a \textit{knotted projection} \cite{knotted}, if all spatial embeddings of $G$ which can be obtained from $\tilde{f}$ are non-trivial. 



\begin{figure}[H]
        \begin{center}
          \includegraphics[height=0.18\linewidth]{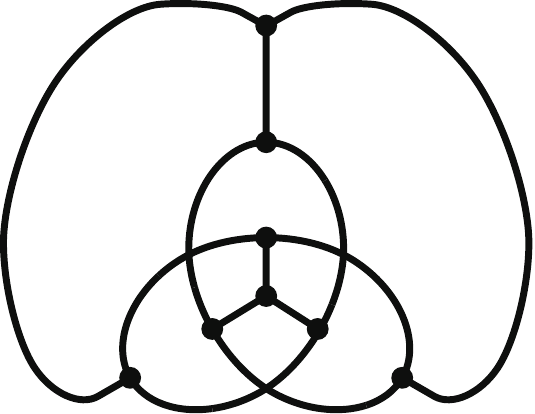}\\
          \ \\
          $\pi \circ f_3(P_3)$
        \end{center}
\caption{}
\label{knotted}
\end{figure}

A planar graph is said to be \textit{trivializable} if it has no knotted projections.  In \cite{knotted} Taniyama gave a class of trivializable graphs. In \cite{class} Sugiura and Suzuki extended the class. In \cite{tamura} Tamura gave another class of trivializable graphs.

For a spatial embedding of a trivializable planar graph, the same argument as
for a link works, and we have the following proposition.

\begin{prop}\label{tri12} \cite{uncr}
Let $G$ be a trivializable planar graph and $f: G \rightarrow \mathbb{R}^3$ a spatial embedding of $G$. Then $u(f) \leq \dfrac{c(f)}{2}$.
\end{prop}

\newpage

\section{UNKNOTTING NUMBERS AND CROSSING NUMBERS OF HANDLEBODY-KNOTS}

We review that crossing change of a handlebody-knot is an unknotting operation \cite{unqc}.

A \textit{diagram} of a handlebody-knot $H$ is that of a spatial trivalent graph representing $H$. In \cite{handle}, Ishii gave a list of fundamental moves among diagrams of handlebody-knots, which is called R1-6 moves illustrated in Fig.\ \ref{reide}. Ishii showed that two handlebody-knots are equivalent if and only if their representing diagrams are related by a finite sequence of R1-6 moves. Note that R6-move is also called \textit{IH-move}. 

\begin{figure}[H]
      \begin{tabular}{c}
      \begin{minipage}{0.3\hsize}
        \begin{center}
          \includegraphics[height=0.4\linewidth]{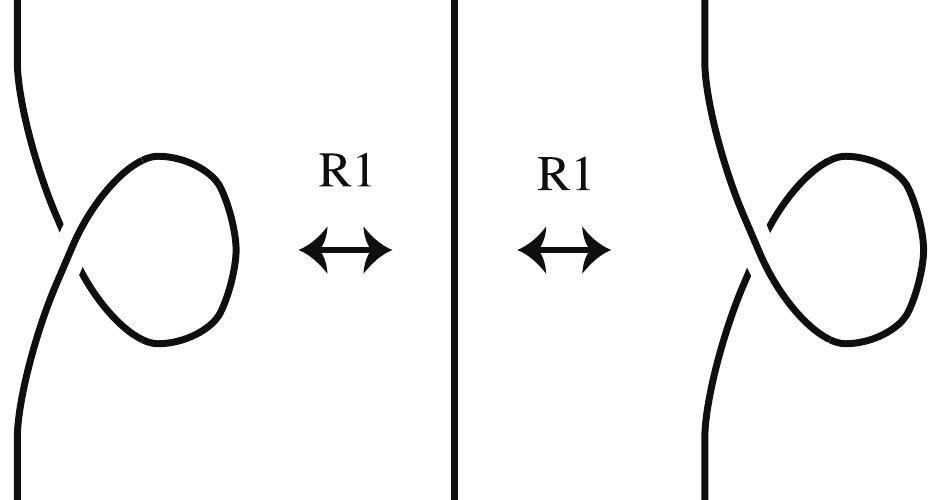}
        \end{center}
      \end{minipage}
      \begin{minipage}{0.25\hsize}
        \begin{center}
          \includegraphics[height=0.48\linewidth]{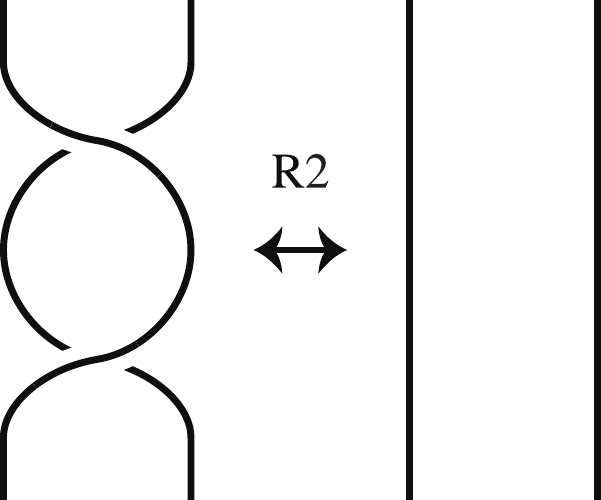}
        \end{center}
      \end{minipage}
      \begin{minipage}{0.25\hsize}
        \begin{center}
          \includegraphics[height=0.48\linewidth]{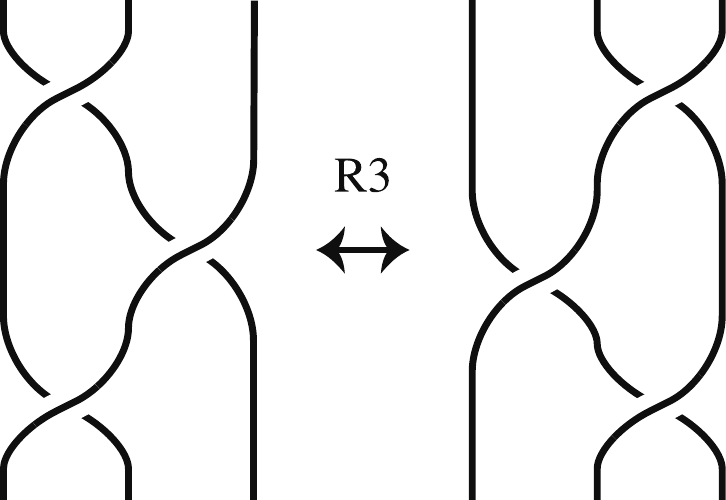}
        \end{center}
      \end{minipage}
         \end{tabular}\\
\ \vspace{0.5cm} \\
       \begin{tabular}{c}
      \begin{minipage}{0.36\hsize}
        \begin{center}
          \includegraphics[height=0.333333\linewidth]{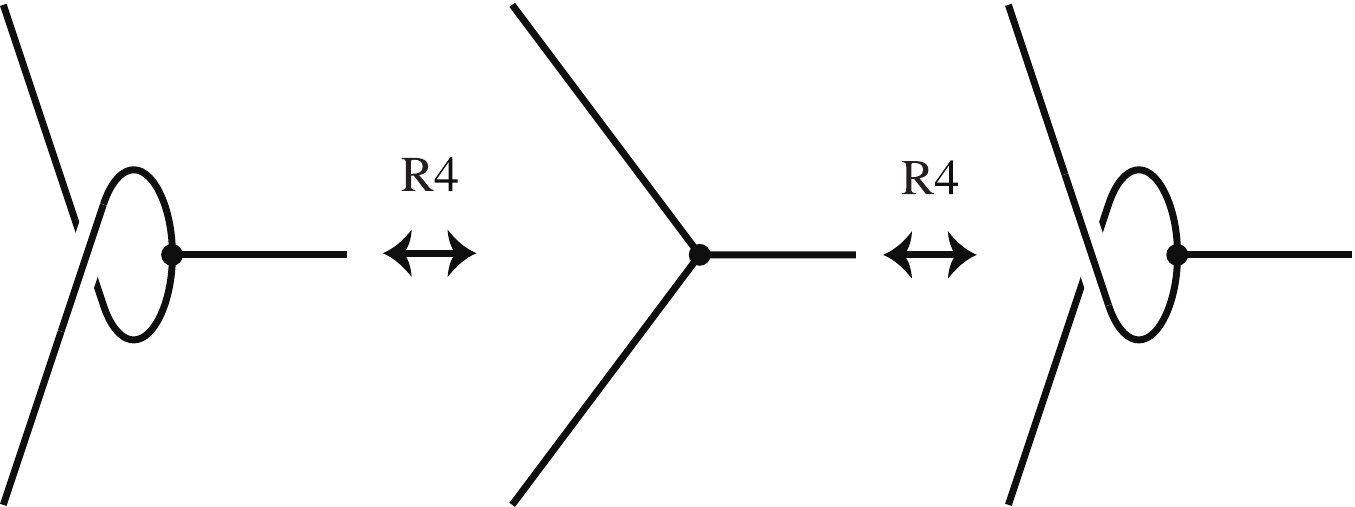}
        \end{center}
      \end{minipage}
      \begin{minipage}{0.36\hsize}
        \begin{center}
          \includegraphics[height=0.333333\linewidth]{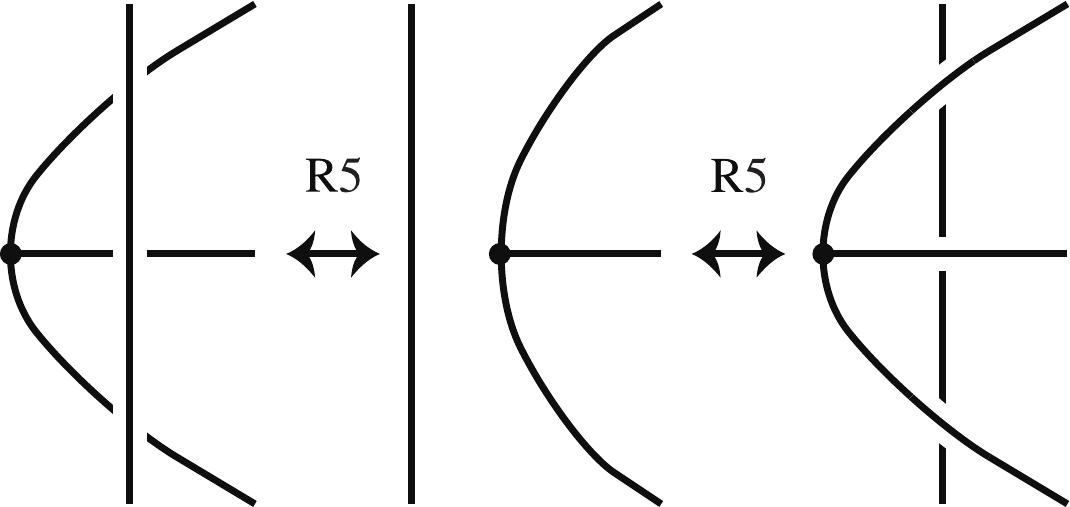}
        \end{center}
      \end{minipage}
      \begin{minipage}{0.2\hsize}
        \begin{center}
          \includegraphics[height=0.6\linewidth]{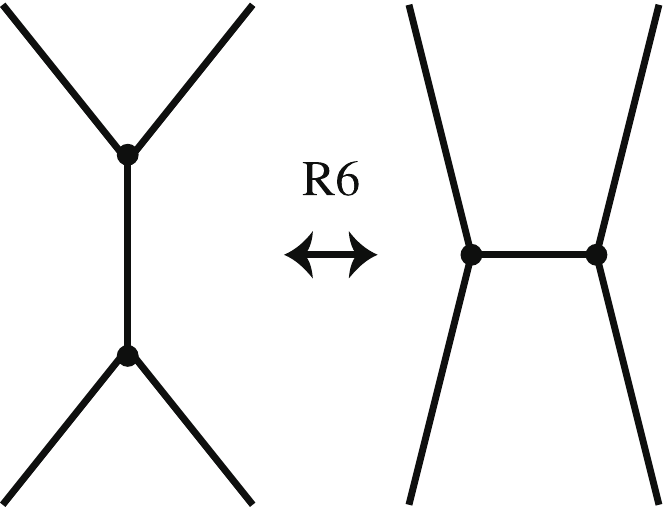}
        \end{center}
      \end{minipage}
      \end{tabular}
      \caption{}
      \label{reide}
\end{figure}

A crossing change of a handlebody-knot $H$ is that of a spatial trivalent graph representing $H$. This move can be realized by switching two tubes illustrated in Fig.\ \ref{cch}. A genus $n$ handlebody-knot is \textit{trivial} if it is equivalent to a handlebody-knot represented by a diagram  illustrated in Fig.\ \ref{tg}.


\begin{figure}[H]
\centering
          \includegraphics[height=0.12\linewidth]{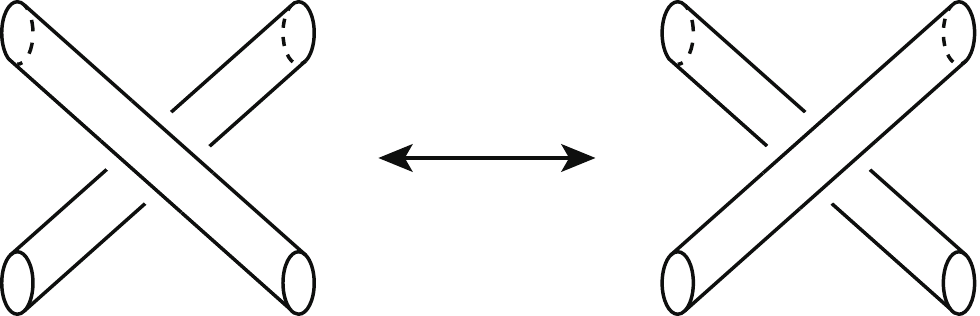}
         \caption{}
         \label{cch}
\end{figure}


\begin{figure}[H]
      \centering
      \includegraphics[width=0.4\linewidth]{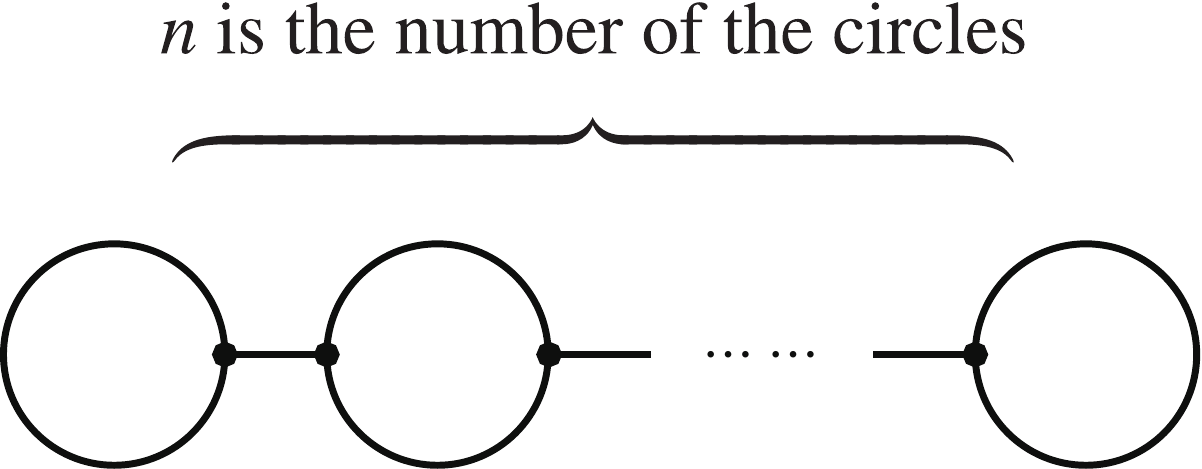}
         \caption{}
         \label{tg}
\end{figure}

Let $T_n$ be the trivalent graph whose image is illustrated in Fig.\ \ref{tg}. Any handlebody-knot is represented by a diagram of a spatial embedding of $T_n$ since a genus $n$ handlebody has $T_n$ as a spine. Note that $T_n$ is a trivializable graph \cite{class}. Namely, any diagram $D$ of a spatial embedding of $T_n$ can be changed to a trivial spatial graph diagram by changing over/under informations at some crossings of $D$. Then we have the following proposition.

\begin{prop}\label{unop} \cite[Proposition\ 2.1]{unqc} Any handlebody-knot can be transformed into trivial one by crossing changes.
\end{prop}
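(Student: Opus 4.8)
The plan is to reduce the statement directly to the trivializability of $T_n$, which is the only substantial input and has already been recorded above. First I would observe that since a genus $n$ handlebody has $T_n$ as a spine, any genus $n$ handlebody-knot $H$ can be represented by a diagram $D$ of some spatial embedding $f$ of $T_n$; this is exactly the representation used in the definition of a crossing change of a handlebody-knot.

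Next I would invoke the trivializability of $T_n$: because $T_n$ has no knotted projection, there is a subset $A \subset C(D)$ of crossings such that changing the over/under information at every crossing in $A$ turns $D$ into a diagram $D'$ of a \emph{trivial} spatial embedding of $T_n$, that is, one whose image lies in $\mathbb{R}^2$. The key point here is that changing the over/under information at a single crossing of the graph diagram is precisely a crossing change of the spatial trivalent graph, and by definition a crossing change of the spine is a crossing change of the handlebody-knot, realized by the tube-switching of Fig.\ \ref{cch}. Thus the $|A|$ over/under switches amount to $|A|$ crossing changes of $H$.

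Finally I would identify the resulting object: a planar embedding of $T_n$ has a regular neighborhood that is a standardly embedded handlebody, and since the first Betti number of $T_n$ equals $n$, this neighborhood is the genus $n$ trivial handlebody-knot of Fig.\ \ref{tg}. Hence after the crossing changes described above $H$ becomes the trivial handlebody-knot, which proves the proposition.

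The step I expect to require the most care is the bookkeeping in the middle paragraph, namely checking that a crossing change performed on the diagram of the spine genuinely corresponds to a crossing change of the handlebody-knot in the sense defined via Fig.\ \ref{cch}, and that passing to the regular neighborhood sends a trivial spatial embedding of $T_n$ to the trivial handlebody-knot rather than merely to some handlebody-knot. Both assertions are essentially definitional, but they must be stated explicitly in order to make the reduction to the trivializability of $T_n$ completely airtight.
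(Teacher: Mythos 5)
Your proposal is correct and follows essentially the same route as the paper: the paper also deduces the proposition directly from the fact that any handlebody-knot is represented by a diagram of a spatial embedding of the spine $T_n$, together with the trivializability of $T_n$, so that switching over/under information at suitably chosen crossings (each such switch being a crossing change of the handlebody-knot) yields a planar spine and hence the trivial handlebody-knot. The extra care you flag about matching diagrammatic switches with the tube-switching definition of a crossing change is exactly the definitional bookkeeping the paper leaves implicit.
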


Therefore unknotting number is naturally extended to handlebody-knots as follows. For a handlebody-knot $H$, the unknotting number $u(H)$ is the minimal number of crossing changes needed to obtain a trivial handlebody-knot from $H$. The crossing number $c(H)$ is the minimal number of crossing points among all diagrams of handlebody-knots that are equivalent to $H$. 

By the proof of \cite[Proposition\ 3.1]{forbidden} we see that any diagram $D$ of a spatial graph can be transformed into a diagram of a spatial graph whose neighborhood are ambient isotopic to a neighborhood of a trivial bouquet by changing over/under informations at some crossings of $D$. Therefore, in \cite{unqc}, Iwakiri also showed that Proposition\ \ref{unop} can be refined to the strong statement as follows.

\begin{prop}\label{dunop} \cite{unqc} Any handlebody-knot diagram can be transformed into a trivial handlebody-knot diagram by changing over/under informations at some crossings of the diagram.
\end{prop}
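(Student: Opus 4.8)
The plan is to exploit the fact that a handlebody-knot is determined by the regular neighborhood of a spine, so that triviality is a property of that neighborhood rather than of the particular spatial graph carrying it. It therefore suffices, starting from the given diagram, to assign over/under information at its crossings so that the resulting spatial graph has regular neighborhood ambient isotopic to the trivial handlebody. First I would fix an arbitrary diagram $D$ of the handlebody-knot $H$; by definition $D$ is a diagram of a spatial trivalent graph $G$ that is a spine of $H$, and a crossing change of $H$ at a crossing of $D$ is exactly the switch of the over/under information at that crossing of the spatial graph diagram (Fig.\ \ref{cch}). In this way the whole assertion is reduced to a statement about the spatial graph diagram $D$ with its regular projection held fixed.

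Next I would invoke the reduction quoted just before the statement. By the proof of \cite[Proposition\ 3.1]{forbidden}, for the fixed regular projection underlying $D$ there is an assignment of over/under information at the crossings for which the resulting spatial graph $G'$ (having the same projection as $D$) has regular neighborhood ambient isotopic to a regular neighborhood of a \emph{trivial} bouquet. The mechanism behind this is a height-function or descending-diagram argument carried out on the neighborhood: after switching exactly those crossings that force the strands to descend monotonically along a chosen traversal of the graph, the neighborhood is recognized as a standardly embedded handlebody, i.e.\ the neighborhood of a trivial bouquet. The essential feature is that this procedure only switches crossings and never modifies the projection, which is precisely the strong form the statement requires.

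Finally I would conclude: a regular neighborhood of a trivial bouquet is, by definition, the trivial handlebody (Fig.\ \ref{tg}); hence the spatial graph $G'$ produced above is a spine of the trivial handlebody-knot, and $D$ equipped with the new over/under information is a diagram of the trivial handlebody-knot. The point I expect to need the most care is the passage from the spatial-graph statement of \cite[Proposition\ 3.1]{forbidden} to the handlebody-knot statement: one must use that a handlebody-knot depends only on the ambient isotopy class of the regular neighborhood of a spine, so that it is irrelevant whether $G'$ itself is a bouquet---only its neighborhood matters---together with the identification of the standard handlebody of Fig.\ \ref{tg} with a regular neighborhood of a trivial bouquet. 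Granting these two standard facts about regular neighborhoods of graphs, Proposition\ \ref{dunop} follows, refining Proposition\ \ref{unop} to the form in which no crossing is ever added or removed and the underlying projection is never changed.
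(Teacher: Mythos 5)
Your argument is correct and follows essentially the same route as the paper, which likewise reduces the statement to the fact (from the proof of \cite[Proposition\ 3.1]{forbidden}) that changing over/under information at some crossings of any fixed regular projection yields a spatial graph whose regular neighborhood is that of a trivial bouquet, and then observes that this neighborhood is the trivial handlebody. The paper gives no further detail beyond this citation, so your elaboration of the height-function mechanism and of the passage from spatial graphs to handlebody-knots is consistent with, and slightly more explicit than, the paper's own justification.
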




For a handlebody-knot diagram $D$, the unknotting number $u(D)$ is the minimal number of changing over/under informations at crossings of $D$ needed to obtain a trivial handlebody-knot diagram. Same as Proposition\ \ref{tri12} we have $u(D) \leq \dfrac{c(D)}{2}$ and $u(H) \leq \dfrac{c(H)}{2}$.

In section 5, we show that a handlebody-knot $H$ satisfies $u(H)=\dfrac{c(H)}{2}$ if and only if $H$ is trivial\ (Theorem\ \ref{12c1}). Then it is natural to ask when handlebody-knots satisfy the equality $u(H) =\dfrac{c(H)-1}{2}$. Let $H$,\ $H_1$ and $H_2$ be handlebody-knots in $\mathbb{R}^3$ and let $S$ be a $2-$sphere in $\mathbb{R}^3$. Suppose that $H \cap S=H_1 \cap H_2$ is a $2-$disk and $H=H_1 \cup H_2$. Then $H$ is said to be a \textit{disk sum} of $H_1$ and $H_2$ and denoted by $H=H_1 \# H_2$. In \cite{unbound} Taniyama showed that if a classical knot $K$ satisfies $u(K) = \dfrac{c(K)-1}{2}$ then $K$ is a $(2,\ p)$-torus knot for some odd number $p \neq \pm 1$\ (Theorem\ \ref{12c2k}). Therefore the handlebody-knots illustrated in Fig.\ \ref{d2n1} may satisfy the equality. But by the following proposition only two of these handlebody-knots satisfy the equality.

\begin{prop}\label{2br}
Let $n\geq 2$ and let $H$ be a genus $n$ handlebody-knot such that $H=K\ \#\ O_{n-1}$, where $K$ is a genus $1$ handlebody-knot whose spine is a $2-$bridge knot and $O_{n-1}$ is a genus $n-1$ trivial handlebody-knot. Then $u(H)=1$.
\end{prop}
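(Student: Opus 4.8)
The plan is to establish the two bounds $u(H)\ge 1$ and $u(H)\le 1$, after first reducing to the case $n=2$. Since a disk sum of trivial handlebody-knots is again trivial, I would write $O_{n-1}=O_1\#O_{n-2}$ and hence $H=(K\#O_1)\#O_{n-2}$. A crossing change supported in the $K\#O_1$ summand keeps the complementary $O_{n-2}$ summand trivial, so if one crossing change trivializes $K\#O_1$ then the same change trivializes $H$. It therefore suffices to treat $H=K\#O_1$, a genus $2$ handlebody-knot, where $K=N(k)$ for a nontrivial $2$-bridge knot $k$.

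For the lower bound it is enough to show that $H$ is nontrivial, since by definition $u(H)=0$ exactly when $H$ is trivial. Cutting along the disk-sum sphere and applying van Kampen's theorem, the complement of $H$ is the complement of $N(k)$ glued to the complement of $O_1$ along a disk, so $\pi_1(S^3\setminus H)\cong G(k)*\mathbb{Z}$, where $G(k)$ is the knot group of $k$. The complement of the trivial genus $2$ handlebody-knot is a handlebody, with free fundamental group $F_2$. As $k$ is nontrivial, $G(k)$ is not free, and a free product $G(k)*\mathbb{Z}$ with a non-free factor is not free; hence $H$ is nontrivial and $u(H)\ge 1$.

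For the upper bound I would exhibit a single crossing change carrying $H$ to the trivial genus $2$ handlebody-knot. Placing $k$ in a standard $2$-bridge (rational tangle) diagram and attaching the trivial handle of $O_1$ in the standard position of Fig.~\ref{d2n1} gives an explicit handcuff-graph spine of $H$. The decisive point is that, in trivializing a handlebody-knot, the crossing change may be followed by IH-moves (R6) as well as the ordinary moves R1--R5; thus I do not have to unknot $k$ on the nose, but only have to make the resulting spine IH-equivalent to a planar graph. Passing to the theta-graph spine whose two bridge arcs $e_1,e_2$ satisfy $e_1\cup e_2=k$ and whose third arc $e_3$ is the trivial core of the handle, I would change the single crossing which, together with an IH-move that interchanges $e_3$ with a subarc of $k$, opens the rational tangle into a trivial tangle, and then check by a sequence of R1--R6 moves that the outcome is the standard genus $2$ spine. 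With the lower bound this yields $u(H)=1$.

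The hard part is exactly this upper bound, and more precisely its uniformity over all $2$-bridge knots. For knots such as the $(2,p)$-torus knots with $|p|\ge 5$ the classical unknotting number is at least $2$, and a crossing change performed on a fixed spine can only replace the constituent knot $e_1\cup e_2=k$ by a knot at classical unknotting distance at most $1$; so trivializing on a fixed spine would force $u(k)\le 1$, and the argument genuinely depends on the extra handle, the freedom of IH-moves before and after the crossing change being what permits a single move to trivialize $H$ even when $u(k)>1$. The remaining care is to confirm that the chosen crossing change and IH-move reach the standard spine for the entire continued-fraction normal form of $2$-bridge knots, not merely for the torus-knot or twist-knot subfamilies; I expect this to come down to a finite, local diagrammatic verification organized by the continued-fraction expansion.
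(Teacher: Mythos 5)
Your reduction to $n=2$ and your lower bound are fine (indeed, the nontriviality of $H$ via $\pi_1(S^3\setminus H)\cong G(k)*\mathbb{Z}$ being non-free is spelled out more carefully than in the paper, which takes nontriviality for granted). The problem is the upper bound, which you yourself identify as ``the hard part'': you never actually produce the crossing change, you only describe a program --- change one crossing, follow with IH-moves, and then ``check by a sequence of R1--R6 moves'' that the result is standard, with the verification ``organized by the continued-fraction expansion'' left to be done. That verification is the entire content of the proposition, and there is no reason to expect it to be a routine local computation: as you note, the crossing change cannot unknot the constituent $2$-bridge knot when $u(k)\geq 2$, so whatever move you choose must exploit the extra handle in an essential and non-obvious way, and your sketch does not say what that way is.

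The missing idea, which is how the paper closes this gap in one step, is the \emph{unknotting tunnel}. The paper performs a single crossing change (between the trivial handle and the knot, in the standard plat presentation of the $2$-bridge knot by $2$-braids $b_1,\dots,b_n$) after which the genus-$2$ summand becomes a regular neighborhood of $K'\cup\tau$, where $\tau$ is the standard (upper) tunnel of the $2$-bridge knot $K'$. By the cited result on tunnels of $2$-bridge knots, $\tau$ is an unknotting tunnel, i.e.\ the exterior of $N(K'\cup\tau)$ is a handlebody; hence $N(K'\cup\tau)$ is one side of a genus-$2$ Heegaard splitting of $S^3$ and is therefore the trivial genus-$2$ handlebody-knot. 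Disk-summing with $O_{n-2}$ preserves triviality, so $u(H)\leq 1$. This replaces your open-ended R1--R6 verification over all continued-fraction normal forms with a single appeal to a known theorem, and it makes transparent why the constituent knot need not be unknotted: triviality of a handlebody-knot is a statement about the complement being a handlebody, not about the constituent knots being trivial. Without this (or some equivalent uniform mechanism), your proposal does not constitute a proof.
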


\begin{figure}[H]
\begin{tabular}{c}

      \begin{minipage}[t]{0.24\hsize}
        \begin{center}
          \includegraphics[height=0.6\linewidth]{d311.eps}\\
          $D_3$
        \end{center}
      \end{minipage}
      
      \begin{minipage}[t]{0.24\hsize}
        \begin{center}
          \includegraphics[height=0.6\linewidth]{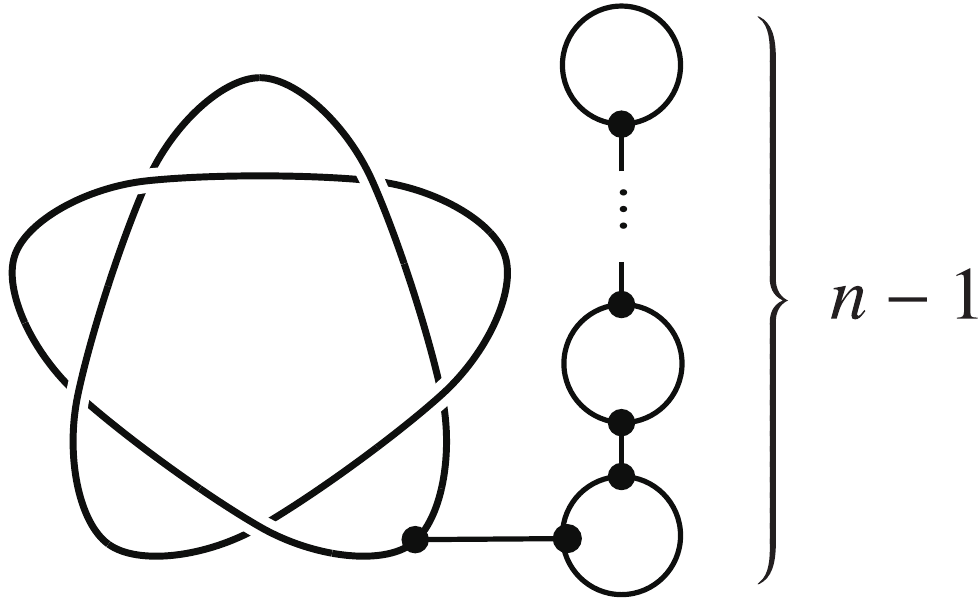}\\
          $D_5$
        \end{center}
      \end{minipage}
      
        \begin{minipage}[t]{0.32\hsize}
         \begin{center}
          \includegraphics[height=0.45\linewidth]{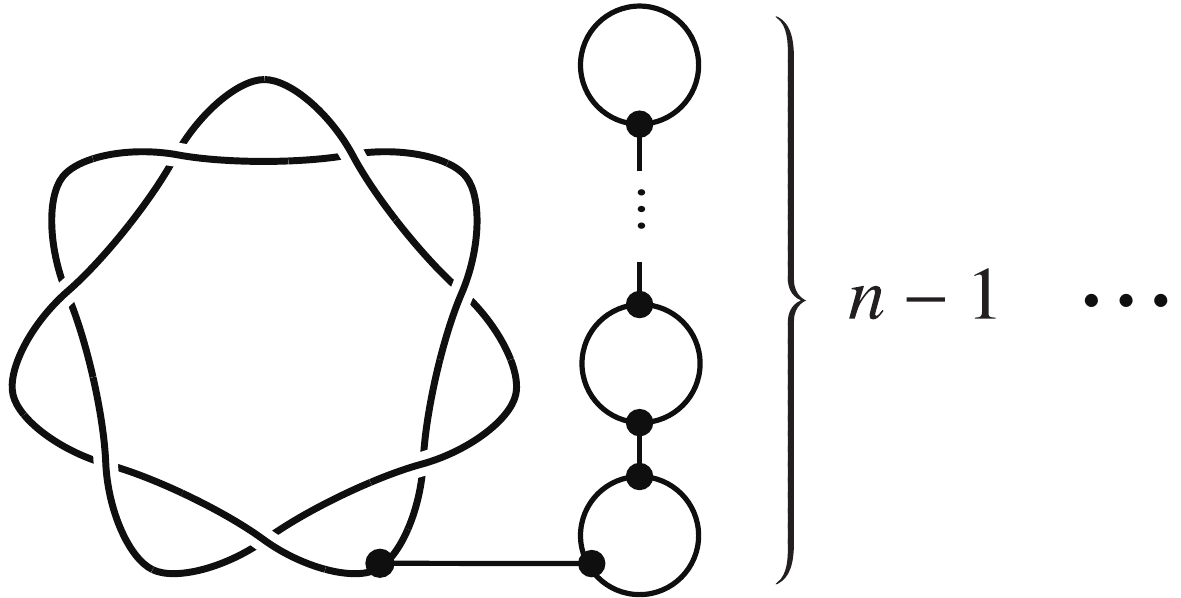}\\
          $D_7$
        \end{center}
      \end{minipage}
      \end{tabular}\\
      \ \vspace{0.5cm} \\
\begin{tabular}{c}

      \begin{minipage}[t]{0.24\hsize}
        \begin{center}
          \includegraphics[height=0.6\linewidth]{d312.eps}\\
          $D_{-3}$
        \end{center}
      \end{minipage}
      
      \begin{minipage}[t]{0.24\hsize}
        \begin{center}
          \includegraphics[height=0.6\linewidth]{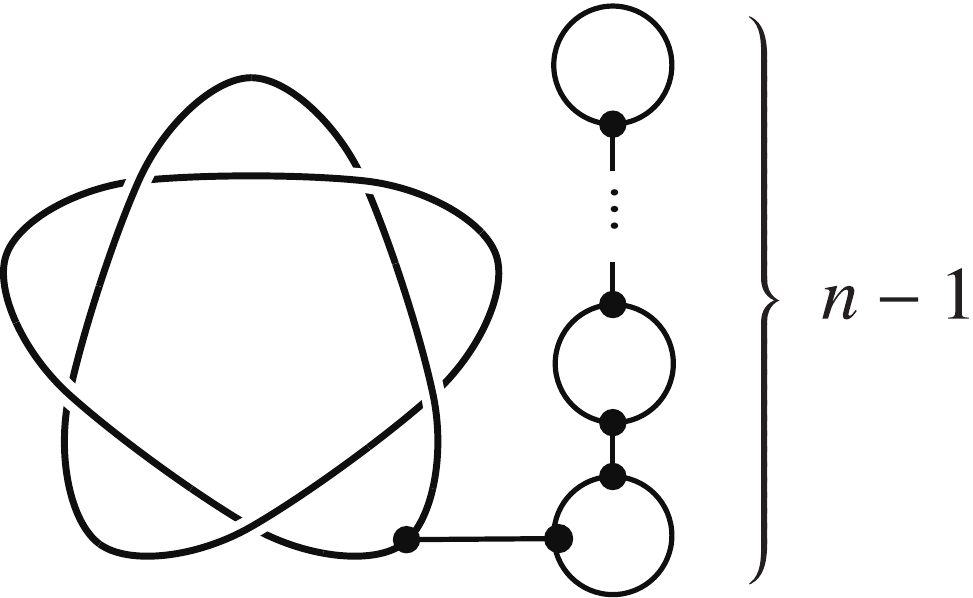}\\
          $D_{-5}$
        \end{center}
      \end{minipage}
      
        \begin{minipage}[t]{0.32\hsize}
         \begin{center}
          \includegraphics[height=0.45\linewidth]{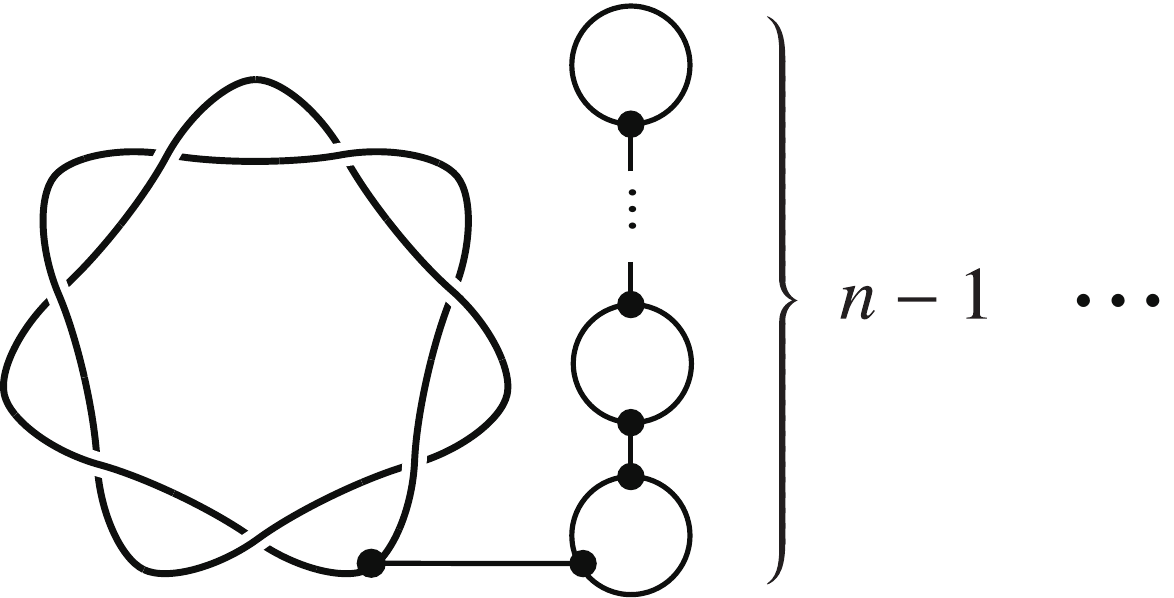}\\
          $D_{-7}$
        \end{center}
      \end{minipage}
      \end{tabular}
      \caption{}
      \label{d2n1}
\end{figure}

\begin{proof}
Let $K'$ be the spine of $K$. Let $H'$ be the handlebody-knot obtained from $K\# O_{n-1}$ by one crossing change as illustrated in the left of Fig.\ \ref{dut}. By \cite[Proposition\ 3.1]{tunnel} we see that the tunnel $\tau$ for $K'$ as illustrated in the right of Fig.\ \ref{dut} is an unknotting tunnel. Therefore the genus 2 handlebody-knot represented by the right of Fig.\ \ref{dut} is trivial. Since a disk sum of two trivial handlebody-knots is trivial,  $H'$ is also trivial.

\begin{figure}[H]
      \begin{tabular}{c}
      \begin{minipage}[t]{0.5\hsize}
        \begin{center}
          \includegraphics[height=0.95\linewidth]{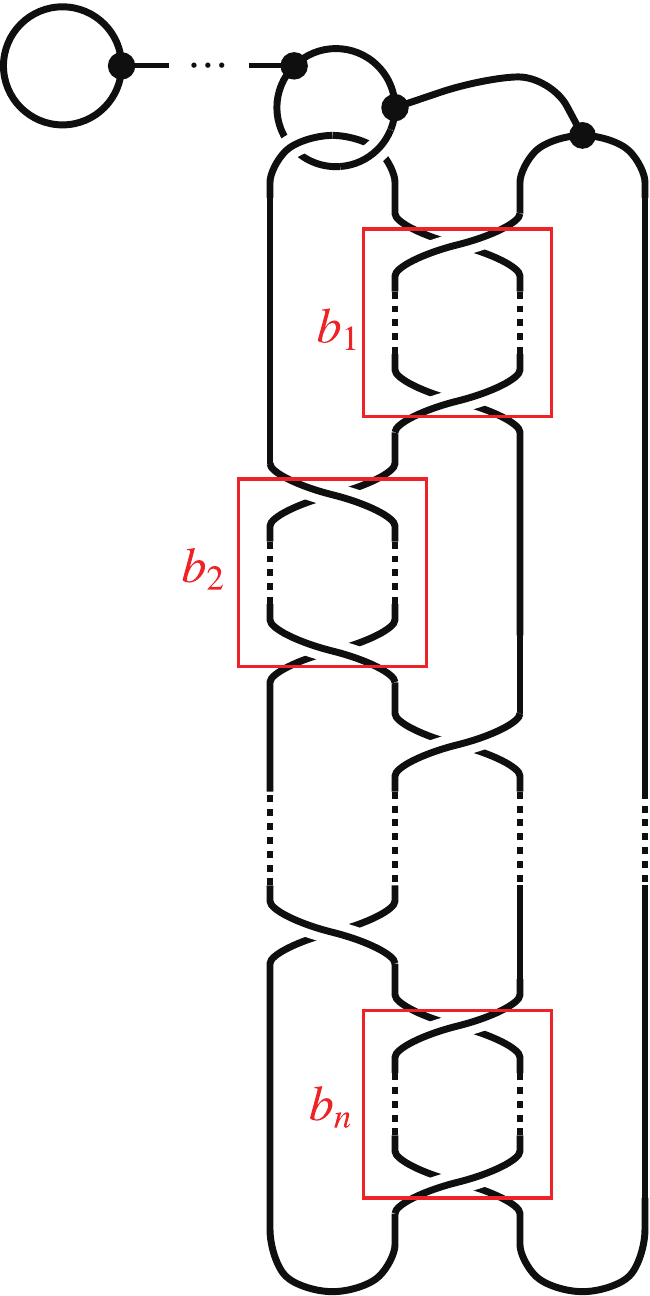} \vspace{0.5cm}\\
          \hspace{0.2\linewidth}$b_1,\ b_2,\ \cdots b_n:\ 2$-braids
        \end{center}
        
      \end{minipage}
      \begin{minipage}[t]{0.5\hsize}
        \begin{center}
          \includegraphics[height=0.9\linewidth]{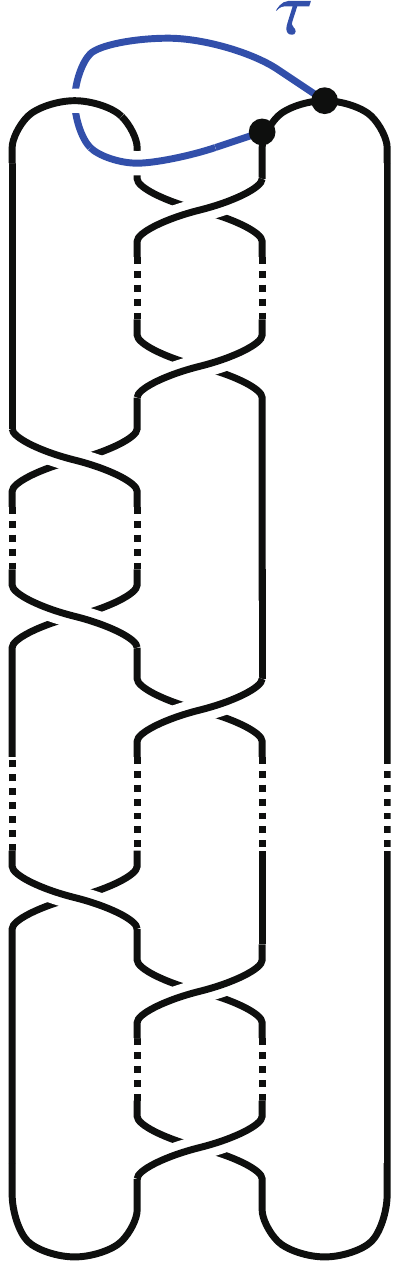}
        \end{center}
      \end{minipage}
      \end{tabular}
      \caption{}
      \label{dut}
\end{figure}

\end{proof}


\section{PROOFS OF THEOREM\ \ref{handcuff} AND THEOREM\ \ref{theta}}
Let $D$ be a diagram of a spatial graph $f(G)$ and let $H$ be a subgraph of $G$. Then the diagram of $f(H)$ that is contained in $D$ is said to be a \textit{subdiagram} of $D$. For subdiagrams $A,\ B$ of a diagram $D$, let $c(A)$ be the number of all crossings on $A$ among the crossings of $D$ and let $c(A,\,B)$ be the number of all crossings between  $A$ and $B$. 

\begin{lemma}\label{self}
Let $G$ be a trivializable graph and let $f$ be a spatial embedding of $G$. Let $D$ be a diagram of $f(G)$. If $D$ has a self-crossing, then $u(D) \leq \dfrac{c(D)-1}{2}$.
\end{lemma}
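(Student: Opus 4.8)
The plan is to refine the two-colouring argument that already yields $u(D)\le \dfrac{c(D)}{2}$ (the argument recalled for Proposition~\ref{tri12}), using the self-crossing to break the one case in which that argument is tight. Since $G$ is trivializable, some over/under assignment on the projection $\pi\circ f$ underlying $D$ gives a trivial spatial graph; changing $D$ at the crossings where it disagrees with such a trivial lift $L$ produces a set $A\subseteq C(D)$ with $u(D)\le|A|$, and since the all-reversed lift $\bar L$ (a mirror image of a trivial spatial graph, hence trivial) is also trivial, the complementary set $C(D)\setminus A$ is trivializing as well. Thus $u(D)\le\min\{|A|,\,c(D)-|A|\}\le\dfrac{c(D)}{2}$. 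When $c(D)$ is odd the two complementary sets cannot both have size $\dfrac{c(D)}{2}$, so $\min\{|A|,\,c(D)-|A|\}\le\dfrac{c(D)-1}{2}$ and we are done with no use of the self-crossing; the whole content therefore lies in the even case, where I must exhibit a trivializing set of size at most $\dfrac{c(D)}{2}-1$.

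Here is where the self-crossing $p$, say on the edge $e$, enters. First I would realize the trivial lift concretely as a descending diagram: fix an ordering and orientation of the edges and a height function so that the diagram descends along each edge and earlier edges pass over later ones; for a trivializable graph such a descending diagram is trivial, and this is exactly a lift $L$ as above. The point of a descending diagram is that the over/under datum at a self-crossing of $e$ is governed only by the traversal of $e$ itself (the strand reached first is the higher one), independently of the rest of the diagram. I would then produce a second trivial descending diagram $L'$ that differs from $L$ in the single crossing $p$: for a loop-edge this is literally the basepoint-shift used in the knot case of Proposition~\ref{12c1k} (sliding the basepoint around the loop past the first visit of $p$ swaps the first/second visit of $p$ and of no other crossing), and for an arc-edge it is the analogous reversal of the descent along the sub-arc cut out by the two preimages of $p$. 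Because $L$ and $L'$ are both trivial and differ only at $p$, their disagreement sets with $D$ differ by exactly one element, so one of them has size $\ne \dfrac{c(D)}{2}$; replacing it by the smaller of it and its complement (again trivial, by the mirror argument) gives a trivializing set of size at most $\dfrac{c(D)}{2}-1\le\dfrac{c(D)-1}{2}$.

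The main obstacle is the construction of $L'$, i.e. producing a trivial diagram that differs from the descending diagram in precisely the self-crossing $p$ and nowhere else. For self-crossings lying on a loop-edge this is the clean basepoint argument and changes exactly one crossing, exactly as in the proof of Proposition~\ref{12c1k}. The delicate case is a self-crossing on an arc-edge: one must check that pushing the kink at $p$ past the opposite height along the loop sub-arc keeps the diagram descending, hence trivial by trivializability, while altering the over/under information at no crossing other than $p$. This must be set up carefully, since a naive crossing change at $p$ inside a fixed trivial diagram need not preserve triviality. Once this single-crossing toggle is established, the counting above is routine and identical to that used for the bound $u(D)\le\dfrac{c(D)}{2}$.
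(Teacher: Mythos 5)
Your counting scheme is sound: if you really had two trivial lifts $L$ and $L'$ of the projection underlying $D$ that differ at exactly the self-crossing $p$, then together with their mirror images you would get trivializing subsets of $C(D)$ of sizes $|A|$, $|A|\pm 1$, $c(D)-|A|$, $c(D)-|A|\mp 1$, and one of these is always at most $\dfrac{c(D)-2}{2}$ when $c(D)$ is even, which settles the lemma. But the proof has a genuine gap at exactly the point you flag as ``the main obstacle.'' First, you assert that for a trivializable graph a descending diagram (with respect to an edge ordering or a height function) is trivial. Trivializability only says that \emph{some} lift of every regular projection is trivial; it gives you no control over \emph{which} lift, and in particular no guarantee that a descending lift is the trivial one. (The paper only ever uses the existential statement, or else verifies triviality of a specific height-function lift by hand for specific graphs, as in Lemmas \ref{dtheta} and \ref{31}.) Second, even granting a trivial descending lift $L$, you never construct $L'$: the basepoint-shift works for a knot, but for a self-crossing of an edge of a graph you only sketch ``the analogous reversal of the descent along the sub-arc'' and explicitly defer the verification that this preserves triviality and changes no crossing other than $p$. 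Since the whole lemma reduces to this single-crossing toggle, the proof is not complete.

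The paper takes a different and more robust route that sidesteps both problems. It smooths $D$ at the self-crossing $P$, splitting off a closed curve $\gamma_1$ (a knot diagram) from the remainder $\gamma_2$. Viewing $\gamma_1$ as a loop attached at $P$, one can change crossings so that $\gamma_1$ passes entirely over the rest of $D$ and is itself unknotted, or entirely under; these two options are complementary on the crossings lying on $\gamma_1$ and \emph{neither requires changing $P$}, so one of them uses at most $\dfrac{c(D)-c(\gamma_2)-1}{2}$ changes and leaves a spatial embedding with diagram $\gamma_2$. Since $\gamma_2$ is a diagram of a spatial embedding of a trivializable graph, the generic bound $u(\gamma_2)\le \dfrac{c(\gamma_2)}{2}$ (which needs only the existential form of trivializability) finishes the estimate. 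If you want to salvage your approach, you would essentially have to prove the single-crossing-toggle claim as a separate lemma, and for that you would need to restrict to graphs where a descending lift is known to be trivial; the smoothing argument avoids this entirely.
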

\noindent
\begin{proof}
Let $P$ be a self-crossing of $D$. By smoothing $D$ at $P$, we have a diagram $D'$ such that one of the components of $D'$ represents a knot\ (see Fig.\ \ref{selfc}). Let $\gamma_1$ be a component of $D'$ that represents a knot and let $\gamma_2$ be the other component of $D'$. 

\begin{figure}[H]
      \begin{tabular}{c}
      \begin{minipage}[b]{0.5\hsize}
        \begin{center}
          \includegraphics[width=0.5\linewidth]{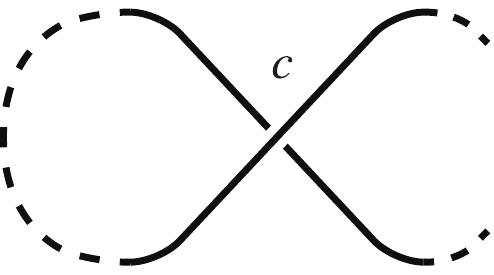} \vspace{0.35cm} \\
          \hspace{0.08\linewidth}$D$
        \end{center}
        
      \end{minipage}
      \begin{minipage}[b]{0.5\hsize}
        \begin{center}
          \includegraphics[width=0.5\linewidth]{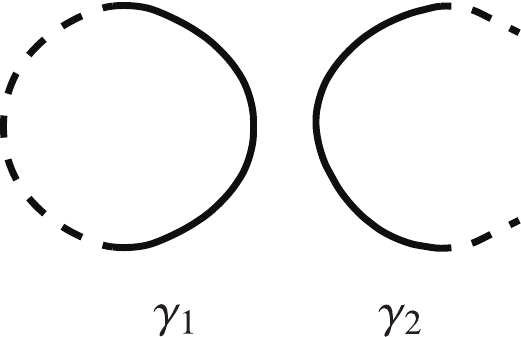}\\
          \hspace{0.08\linewidth}$D'$
        \end{center}
      \end{minipage}
      \end{tabular}
      \caption{}
      \label{selfc}
\end{figure}

If we change some crossings on $\gamma_1$ so that the part $\gamma_1$ is over other component of $D$ and itself unknotted then we have a spatial embedding that has a diagram $\gamma_2$. Also we may change some crossings on $\gamma_1$ so that the part $\gamma_1$ is under the other component of $D$ and itself unknotted. Note that we can choose these two crossing changes complementary on the crossings on $\gamma_1$. We choose one of them that have no more crossing changes than the other. Thus by changing no more than $\dfrac{c(D)-c(\gamma_2)-1}{2}$ crossings of $D$ we have a spatial embedding that has a diagram $\gamma_2$. Note that the key point here is that we do not need to change the crossing $c$. Since $\gamma_2$ is also a diagram of a spatial embedding of a trivializable graph, we have $u(\gamma_2) \leq \dfrac{c(\gamma_2)}{2}$. Therefore we have 
$$u(D) \leq u(\gamma_2) + \dfrac{c(D)-c(\gamma_2)-1}{2} \leq \dfrac{c(\gamma_2) +(\ c(D)-c(\gamma_2)-1\ )}{2}=\dfrac{c(D)-1}{2}$$
\end{proof}

\begin{lemma}\label{mini}
Let $G$ be a trivializable planar graph and let $f$ be a spatial embedding of $G$ such that $u(f)=\dfrac{c(f)}{2}$. Let $D$ be a minimal crossing diagram of $f(G)$. Then $u(D)=\dfrac{c(D)}{2}$.
\end{lemma}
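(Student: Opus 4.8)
The plan is to prove the statement by a squeeze (sandwich) argument: I will trap $u(D)$ between $c(f)/2$ and $c(D)/2$, and then use the minimality of $D$ to collapse the two ends to the same value. The whole content lies in assembling three facts that are already available, namely a general upper bound on $u(D)$, a general lower bound on $u(D)$, and the equality $c(D)=c(f)$.

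First I would record the general bound $u(D)\le \dfrac{c(D)}{2}$, valid for any diagram of a spatial embedding of a trivializable planar graph. This is exactly the argument recalled just before Proposition \ref{tri12}: choose a set $A\subset C(D)$ of crossings whose over/under reversal trivializes $D$, observe that reversing the complementary set $C(D)-A$ also yields a trivial diagram (the resulting diagram is the all-crossing mirror of a trivial one, hence trivial), and conclude $u(D)\le\min\{|A|,\,|C(D)-A|\}\le \dfrac{c(D)}{2}$. Since $D$ is a \emph{minimal crossing} diagram of $f(G)$, we have $c(D)=c(f)$, so this bound reads $u(D)\le \dfrac{c(f)}{2}$.

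Next I would observe the reverse comparison $u(f)\le u(D)$. This holds simply because a sequence of $u(D)$ crossing changes performed on the fixed diagram $D$ is one particular way of transforming $f$ into a trivial spatial embedding; as $u(f)$ is by definition the minimum number of crossing changes over all diagrams/representatives of $f$, it cannot exceed the number realized through $D$. Combining this with the hypothesis $u(f)=\dfrac{c(f)}{2}$ and the bound from the previous paragraph, I obtain
\[
\dfrac{c(f)}{2}=u(f)\le u(D)\le \dfrac{c(D)}{2}=\dfrac{c(f)}{2},
\]
so every term in the chain is equal; in particular $u(D)=\dfrac{c(D)}{2}$, which is the desired conclusion.

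I do not expect any serious obstacle here: the proof is purely a matter of chaining the correct inequalities together with the equality $c(D)=c(f)$ forced by minimality. The only step warranting a moment of care is the direction $u(f)\le u(D)$, i.e. that restricting to a single specific diagram can only increase (never decrease) the number of crossing changes required; this is immediate once $u(f)$ is read as an infimum taken over all diagrams of $f(G)$, of which $D$ is one.
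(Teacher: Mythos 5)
Your proof is correct and follows essentially the same route as the paper: the paper's proof also reduces to showing $u(D)\geq \dfrac{c(D)}{2}$ via $u(f)\leq u(D)$ and $c(f)=c(D)$, taking the upper bound $u(D)\leq\dfrac{c(D)}{2}$ as already established. The only difference is that you spell out that upper bound explicitly, which the paper leaves implicit.
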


\begin{proof} It is sufficient to show that $u(D) \geq \dfrac{c(D)}{2}$. Since $u(f) \leq u(D)$ and $c(f)=c(D)$ we have
$$u(D) \geq u(f)=\dfrac{c(f)}{2}=\dfrac{c(D)}{2}$$
\end{proof}

\begin{lemma}\label{dhandcuff}
Let $D$ be a diagram of a spatial embedding of a handcuff-graph such that $u(D)=\dfrac{c(D)}{2}$. Then $D$ satisfies the following conditions {\rm:} \\
$(1)\,$Each edge of $D$ has no self-crossings. \\
$(2)\,$All crossings of $D$ are crossings between two loops.\\
$(3)\,$Two loops of $D$ form an alternating diagram or a diagram without crossings.
\end{lemma}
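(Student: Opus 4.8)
Lemma \ref{dhandcuff} asserts that if a handcuff-graph diagram $D$ realizes the maximal unknotting ratio $u(D)=c(D)/2$, then $D$ must have a very rigid structure: no self-crossings on any edge, all crossings between the two loops, and the two loops forming an alternating or crossing-free diagram. The natural strategy is contrapositive: assume one of the three conditions fails and deduce the strict inequality $u(D) < c(D)/2$, i.e. $u(D) \leq (c(D)-1)/2$. Since Proposition \ref{tri12} already gives $u(D) \leq c(D)/2$ in general, the whole game is to squeeze out one extra crossing whenever the configuration is "non-extremal." I'll work through the three conditions in order, using the earlier lemmas as the main machinery.

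**Plan, condition by condition.** The plan is to handle condition $(1)$ first, since Lemma \ref{self} does exactly this: if any edge of $D$ has a self-crossing, then $u(D) \leq (c(D)-1)/2 < c(D)/2$, contradicting the hypothesis. So a diagram with $u(D) = c(D)/2$ automatically satisfies $(1)$. For condition $(2)$, I would analyze the handcuff-graph's structure: it has two loops joined by a connecting edge (the "handle"). Having dispatched self-crossings, the remaining crossings are crossings between distinct edges. I must rule out crossings involving the connecting edge (either between the connecting edge and a loop, or between the two loops "through" a crossing that isn't loop-loop). The key idea is that a crossing on the connecting edge can be removed by a complementary-crossing-change argument analogous to Lemma \ref{self}: one can push the connecting edge entirely over or entirely under the rest of the diagram and thereby trivialize its interaction, again saving the relevant crossing from needing a change. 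This reduces the counting to the two-loop subdiagram. For condition $(3)$, once all crossings are between the two loops, the two loops form a two-component link diagram, and I invoke the link case — Taniyama's characterization (the quoted Theorem from \cite{unbound}) — which forces the two-loop subdiagram to be alternating or crossing-free whenever the equality is attained.

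**The main obstacle.** The hard part will be condition $(2)$: carefully showing that any crossing involving the connecting edge, or any self-crossing-free crossing that is nonetheless not loop-loop, strictly lowers the unknotting ratio. The handcuff-graph's connecting edge is an arc, not a loop, so it does not itself carry knotting; the subtlety is that a crossing between the connecting edge and a loop can be "absorbed" by sliding the connecting edge off, but one must verify that this sliding can be arranged by crossing changes that are complementary on the crossings along the connecting edge, exactly as in the proof of Lemma \ref{self}, so that we genuinely save at least one crossing rather than merely rebalancing. I expect to set up a smoothing or an over/under-pushing argument: treat the connecting edge as a trivalent arc, and argue that changing some subset $A$ of the crossings on it makes it lie entirely above the diagram, while the complementary subset makes it lie entirely below; choosing the smaller subset and noting we need not touch at least one distinguished crossing yields the strict saving. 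The cleanest route may be to reduce $(2)$ to a subgraph-trivializability statement for the theta-or-handcuff spine and then finish $(3)$ purely as a two-component link computation, so the remaining bookkeeping is just the crossing-count inequality
$$u(D) \leq u(\gamma_1 \cup \gamma_2) + \dfrac{c(D)-c(\gamma_1 \cup \gamma_2)-1}{2} \leq \dfrac{c(D)-1}{2},$$
mirroring the inequality at the end of Lemma \ref{self}.
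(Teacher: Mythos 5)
Your handling of condition $(1)$ via Lemma \ref{self} matches the paper, but your arguments for $(2)$ and $(3)$ have genuine gaps. For $(2)$: pushing the connecting edge $e$ entirely over (or under) the rest of $D$ does \emph{not} trivialize the embedding --- the two loops may still form a nontrivial link --- so after spending at most $\tfrac{1}{2}\bigl(c(\gamma_1,e)+c(\gamma_2,e)\bigr)$ changes on $e$ you still need up to $\tfrac{1}{2}\,c(\gamma_1,\gamma_2)$ further changes to unlink the loops, and the two stages act on disjoint sets of crossings, so the total bound is only $\tfrac{c(D)}{2}$: no saving at all. The ``$-1$'' in your displayed inequality has no source in this setting: in Lemma \ref{self} the spared crossing is the smoothed self-crossing, spared because the pushed-off piece is a loop based at that very crossing and can be contracted there regardless of the over/under data at it; the connecting edge of a handcuff-graph has no such distinguished crossing. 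The paper sidesteps all of this by pushing a whole \emph{loop} $\gamma_2$ over (or under) $D-\gamma_2$: since each loop is a planar simple closed curve (by $(1)$), this single complementary-pair argument already yields a trivial diagram, and the crossings between $\gamma_1$ and $e$ lie off $\gamma_2$ and are never touched, giving $u(D)\le\tfrac{1}{2}\bigl(c(D)-c(\gamma_1,e)\bigr)$, hence $c(\gamma_1,e)=0$, and symmetrically $c(\gamma_2,e)=0$.

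For $(3)$, invoking Taniyama's Theorem 1.5(2) only yields that the link $l_1\cup l_2$ admits \emph{some} diagram that is a union of simple closed curves pairwise alternating; the lemma asserts that the \emph{given} subdiagram $\gamma_1\cup\gamma_2$ of $D$ is alternating, and moreover the hypothesis $u(D)=c(D)/2$ does not obviously transfer to the equality $u(L)=c(L)/2$ for the constituent link. The paper instead argues directly on $D$: if $\gamma_1\cup\gamma_2$ is not alternating, there is an arc of $\gamma_1$ passing over $\gamma_2$ at two consecutive crossings $c_1,c_2$, and a height-function argument produces two disjoint trivializing crossing-change sets $A$ and $B$ contained in $C(D)\setminus\{c_1,c_2\}$, whence $u(D)\le\min\{|A|,|B|\}\le\tfrac{c(D)-2}{2}$, contradicting $u(D)=\tfrac{c(D)}{2}$. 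You would need to supply arguments of this diagram-level kind to close both gaps.
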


\begin{proof}
By Lemma \ref{self} $D$ satisfies $(1)$. In the following we show that $D$ satisfies $(2)$ and $(3)$. 

Let $\gamma_1$ and $\gamma_2$ be two loops of $D$ and let $e$ be the edge of $D$ that is not $\gamma_i$\ ($i=1,\ 2$).  If we change some crossings on $\gamma_2$ so that the part $\gamma_2$ is over $D-\gamma_2$ of $D$ then we have a diagram of a trivial spatial embedding of $G$ since $\gamma_i$ is a simple closed curve on $\mathbb{R}^2$\ ($i=1,\ 2$). See for example Fig.\ \ref{alc}.  Also we may change some crossings on $\gamma_2$ that the part $\gamma_2$ is under $D-\gamma_2$ of D and itself unknotted. Note that these two crossing changes are complementary on the crossings on $\gamma_2$. We choose one of them that have no more  crossing changes than the other. Thus by changing no more than $\dfrac{c(D)-c(\gamma_1,\ e)}{2}$ crossings of $D$ we have a trivial diagram and $u(D) \leq \dfrac{c(D)-c(\gamma_1,\ e)}{2}$. The key point here is that we do not need to change crossings between $\gamma_1$ and $e$.  Since $u(D) = \dfrac{c(D)}{2}$ we have $c(\gamma_1,\ e)=0$. Similarly we have $c(\gamma_2,\ e)=0$. Therefore $D$ satisfies $(2)$.

\begin{figure}[H]
      \begin{tabular}{c}
      \begin{minipage}[b]{0.3\hsize}
        \begin{center}
          \includegraphics[height=0.7\linewidth]{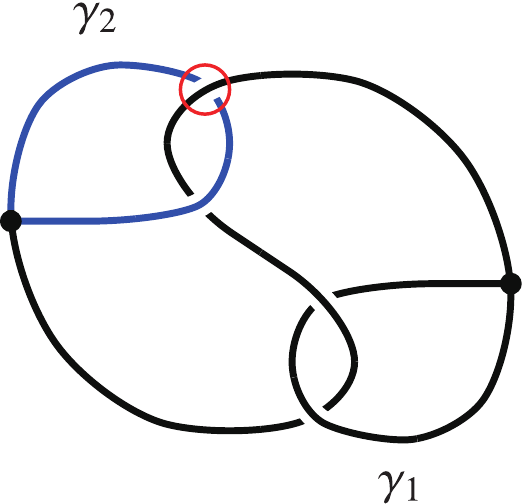}
        \end{center}
        
      \end{minipage}
      \begin{minipage}[b]{0.7\hsize}
        \begin{center}
          \includegraphics[height=0.3\linewidth]{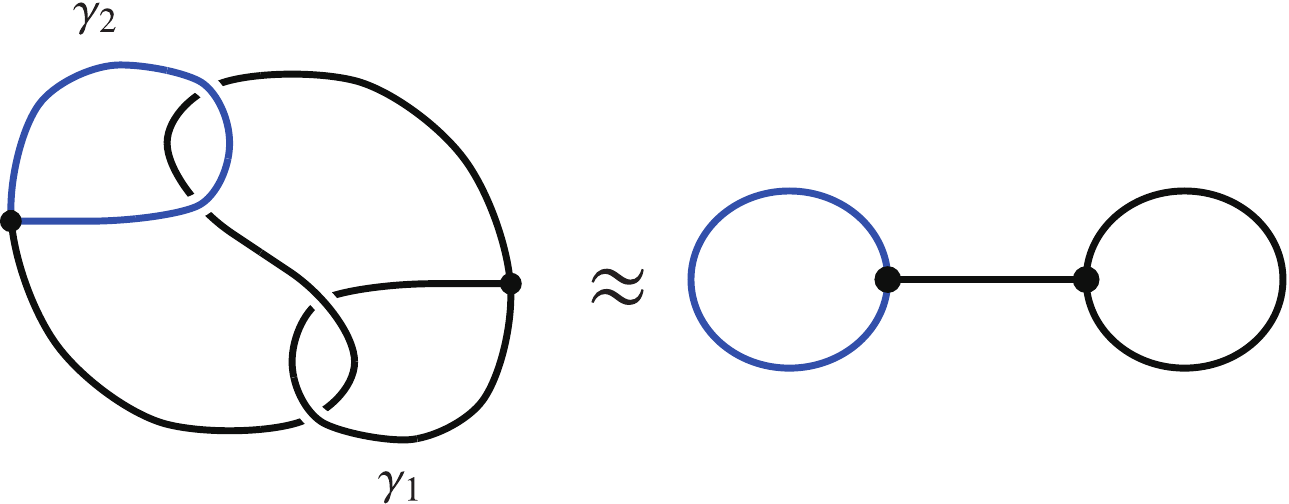}
        \end{center}
      \end{minipage}
      \end{tabular}
      \caption{}
      \label{alc}
\end{figure}

Suppose that $\gamma_1 \cup \gamma_2$ is not an alternating diagram. Then we may suppose without loss of generality that there is an arc $\alpha$ of $\gamma_1$ disjoint from $e$ such that $\alpha \cap \gamma_2=\partial \alpha=\{c_1,\ c_2\}$ and $\gamma_1$ is over $\gamma_2$ at $c_1$ and $c_2$. See Fig.\ \ref{disarc} .

\begin{figure}[H]
\centering
\includegraphics[width=0.25\linewidth]{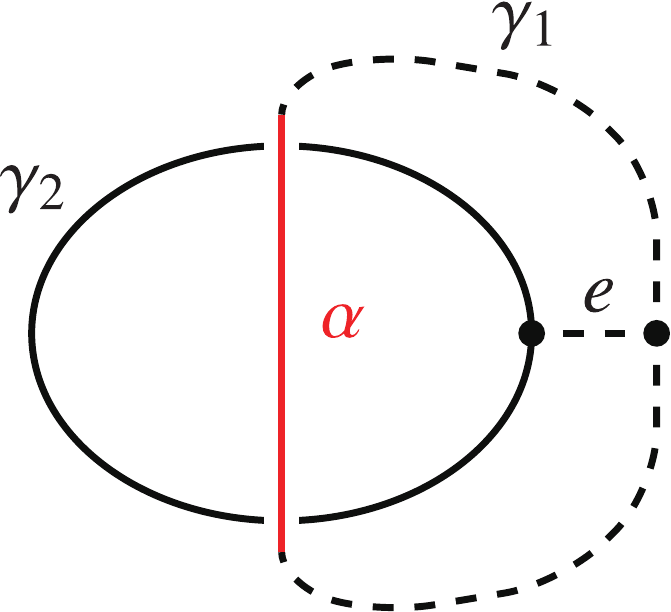}
\caption{}
\label{disarc}
\end{figure}

Let $A$ be the set of all crossings of $D$ at which $\gamma_1$ is under $\gamma_2$. Let $B=C(D)\backslash (A \cup \{c_1.\ c_2\})$. Then by the height function argument first used in \cite{knotted} we see that changing all crossings in $A$ (resp. $B$) produce a trivial spatial embedding. See Fig. \ref{althand}.

\begin{figure}[H]
\begin{tabular}{c}
      \begin{minipage}[b]{0.5\hsize}
        \begin{center}
          \includegraphics[width=0.8\linewidth]{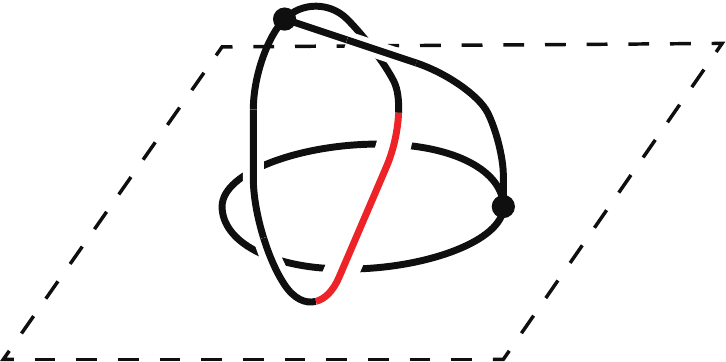} \vspace{0.55cm}

        \end{center}
        
      \end{minipage}
      \begin{minipage}[b]{0.5\hsize}
        \begin{center}
          \includegraphics[width=0.75\linewidth]{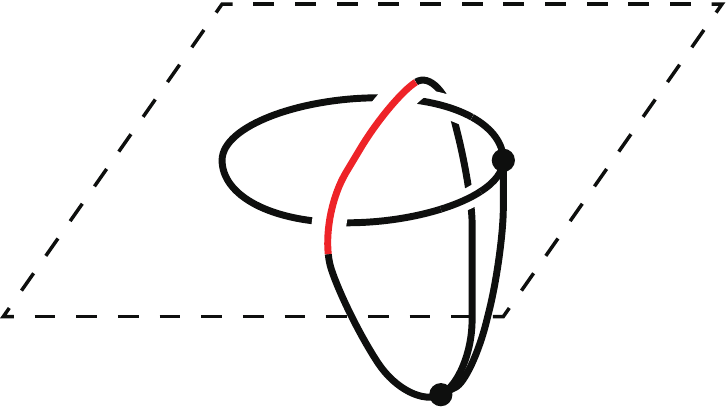} 

        \end{center}
      \end{minipage}
      \end{tabular}
      \caption{}
      \label{althand}
\end{figure}

Therefore we have
$$u(D) \leq \min \{|A|,\ |B|\} \leq \dfrac{c(D)-2}{2}$$
This is contradicts to the equation $u(D) = \dfrac{c(D)}{2}$. Thus $D$ satisfies $(3)$ as desired. 
\end{proof}

\noindent
\textit{Proof of Theorem\ref{handcuff}}\\
First, we show that if there exists a diagram $D$ of $f(G)$ satisfying $(1)$, $(2)$ and $(3)$, then $u(f) = \dfrac{c(f)}{2}$. We may suppose that $c(D)>0$. Let $L=l_1 \cup l_2$ be a $2-$component link represented by two loops of $D$. See for example Fig.\ \ref{altlink}. Since the diagram of $L$ consists of two simple closed curves and it is alternating, we see that twice the absolute value of the linking number $2|\,lk(l_1,l_2)\,|$ is equal to $C(D)$. 
 Therefore we have 
$$u(f) \geq u(L) \geq |\,lk(l_1,l_2)\,| = \dfrac{c(D)}{2}=\dfrac{c(f)}{2}$$
By Proposition \ref{tri12} we have $u(f)=\dfrac{c(f)}{2}$. 

\begin{figure}[H]
\centering
\includegraphics[width=0.25\linewidth]{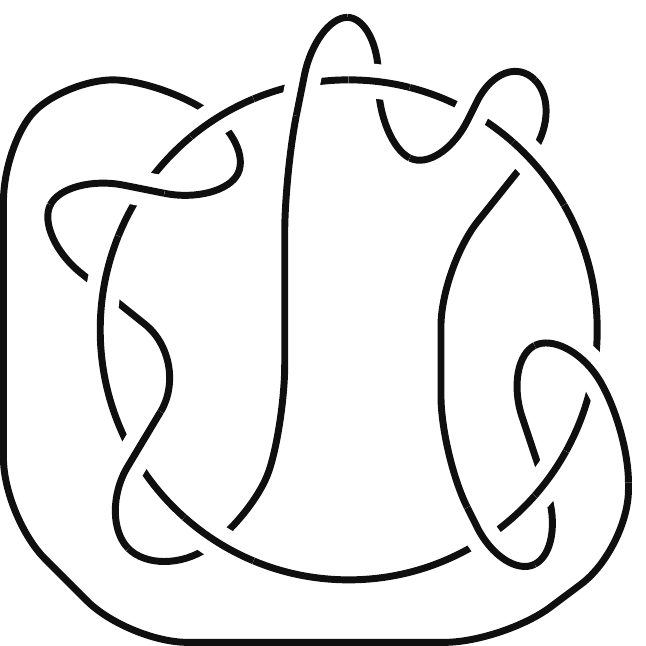}
\caption{}
\label{altlink}
\end{figure}

Let $f$ be a spatial embedding of $G$ such that $u(f)=\dfrac{c(f)}{2}$ and let $D$ be a minimal crossing diagram of $f(G)$. By Lemma\ \ref{mini} we have $u(D)=\dfrac{c(D)}{2}$. By Lemma \ref{dhandcuff} $D$ satisfies $(1)$, $(2)$ and $(3)$ as desired. \qed

\begin{lemma}\label{dtheta}
Let $G$ be a theta curve. Let $D$ be a diagram of a spatial embedding of $G$ such that $u(D)=\dfrac{c(D)}{2}$. Then $c(D)=0$.
\end{lemma}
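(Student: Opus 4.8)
The plan is to run, for the theta curve, an argument parallel to the proof of Lemma \ref{dhandcuff}, but exploiting the extra rigidity supplied by the third edge. Write the three edges of $G$ as $e_1,e_2,e_3$, each joining the two vertices, so that each $e_i\cup e_j$ (for $i\neq j$) is a cycle $\ell_k$ ($\{i,j,k\}=\{1,2,3\}$). Since a theta curve is a trivializable graph and we are assuming $u(D)=c(D)/2$, Lemma \ref{self} forces $D$ to have no self-crossing: otherwise $u(D)\le\frac{c(D)-1}{2}<\frac{c(D)}{2}$. Hence every crossing of $D$ lies between two distinct edges, and it suffices to prove $c(e_i,e_j)=0$ for every pair; then $c(D)=0$.

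First I would fix one edge, say $e_1$, and regard $\ell_1=e_2\cup e_3$ as the complementary cycle. Exactly as in the condition-$(2)$ step of Lemma \ref{dhandcuff}, pushing $e_1$ entirely over the rest of the diagram and pushing it entirely under are complementary crossing changes among the $c(e_1,e_2)+c(e_1,e_3)$ crossings lying on $e_1$; choosing the cheaper one changes at most $\tfrac12\big(c(e_1,e_2)+c(e_1,e_3)\big)$ crossings and, crucially, touches no crossing of $\ell_1$. After this move $e_1$ is everywhere the over- (or under-) strand, while $\ell_1$ retains its original diagram. The key new step is to unknot $\ell_1$ cheaply. Viewing $\ell_1$ as a single knot (forgetting its two vertices), its crossings are exactly the $c(e_2,e_3)$ crossings between $e_2$ and $e_3$, and these are now \emph{self}-crossings of this knot; since a knot is itself a trivializable graph, Lemma \ref{self} applies to the diagram $\ell_1$ whenever $c(e_2,e_3)\ge 1$, unknotting it with at most $\tfrac12\big(c(e_2,e_3)-1\big)$ further crossing changes, none of which disturb $e_1$. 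The two families of changes are disjoint (one among the crossings on $e_1$, the other among the crossings between $e_2$ and $e_3$), and after both the edge $e_1$ is everywhere the overstrand while $\ell_1$ is unknotted; such a diagram represents the trivial theta curve, because $e_1$ then lies in a half-space above an unknotted circle and is therefore boundary-parallel. Combining the two moves would give
$$u(D)\le \tfrac12\big(c(e_1,e_2)+c(e_1,e_3)\big)+\tfrac12\big(c(e_2,e_3)-1\big)=\frac{c(D)-1}{2},$$
contradicting $u(D)=c(D)/2$; hence $c(e_2,e_3)=0$. By the symmetry of $G$ under permuting $e_1,e_2,e_3$, the same argument yields $c(e_1,e_2)=c(e_1,e_3)=0$, and therefore $c(D)=0$.

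The \emph{main obstacle}, and the place where the theta curve genuinely differs from the handcuff-graph, is precisely this unknotting of the complementary cycle. In Lemma \ref{dhandcuff} the analogue of $\ell_1$ is a lollipop (a loop with a free arc), which is automatically trivial, so no further changes are needed and an alternating configuration can survive the argument; here $\ell_1$ is a bona fide cycle that may be knotted and cannot simply be pushed aside. What saves the proof is that its inter-edge crossings count as self-crossings once the vertices are forgotten, so the single-crossing gain of Lemma \ref{self} becomes available, and it is exactly this gain of $1$ that destroys every would-be alternating configuration and forces $c(D)=0$. The two points I would verify most carefully are that the pushed-over diagram with $\ell_1$ unknotted really is a trivial theta curve (the boundary-parallel-arc claim), and that the push changes and the cycle-unknotting changes are genuinely disjoint so that their costs add.
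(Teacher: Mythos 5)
Your reduction to showing $c(e_i,e_j)=0$ for each pair and your crossing-change accounting are fine: the two families of changes are indeed disjoint, and $\tfrac12\bigl(c(e_1,e_2)+c(e_1,e_3)\bigr)+\tfrac12\bigl(c(e_2,e_3)-1\bigr)=\tfrac{c(D)-1}{2}$ is correct once all self-crossings have been excluded by Lemma \ref{self}. The genuine gap is exactly the step you flagged: the claim that the resulting diagram --- $e_1$ everywhere the overstrand with embedded projection, $\ell_1=e_2\cup e_3$ an unknot --- represents a trivial theta curve. ``Lies in a half-space above an unknotted circle, hence boundary-parallel'' does not follow. From that position you can only conclude that $e_1$ is isotopic, rel endpoints and in the complement of $\ell_1$, to an arc sitting just above the projection plane; triviality of the theta curve requires $e_1$ to be isotopic, rel endpoints in the complement of $\ell_1$, into a disk \emph{bounded by} $\ell_1$, and that disk may dive below the plane in an arbitrarily complicated way, because Lemma \ref{self} gives you no control over the unknot diagram of $\ell_1$ beyond its knot type. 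All you actually know about the final configuration is that its three constituent knots are trivial ($e_1\cup e_2$ and $e_1\cup e_3$ are one-bridge presentations, and $e_2\cup e_3$ is unknotted by construction), and the Kinoshita theta curve shows that triviality of all constituent knots does not imply triviality of a theta curve. So this step needs a genuine geometric argument, and none is supplied.

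The paper's proof is structured precisely to avoid this issue: it fixes a single inter-edge crossing $c$, subdivides the graph there into over- and under-vertices $v_1,v_1'$, and builds a height function $h$ from a spanning tree containing a path through $v_1$ with $d_T(v,v_1)=d_T(v,v_1')$; the two embeddings obtained from $h$ and from $-h$ are trivial \emph{by construction} (crossings of $\Pi\circ f$ are eliminated nearest-to-$v$ first), and the corresponding two sets of crossing changes are complementary inside $C(D)\setminus\{c\}$, giving $u(D)\le\tfrac{c(D)-1}{2}$ directly. In other words, the paper chooses \emph{which} trivializing changes to make so that triviality of the outcome is automatic, rather than making convenient changes and then having to prove the outcome is trivial. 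To salvage your decomposition you would have to replace the appeal to Lemma \ref{self} for $\ell_1$ by unknotting changes compatible with a global height function on the whole theta curve --- which is essentially Lemma \ref{31} and the paper's argument.
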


\begin{proof}
By Lemma \ref{self} we may suppose that each edge of $D$ has no self-crossings. Suppose that $c(D)>0$. Then there exists a crossing $c$ on $D$ between two edges. Let $\tilde{f}:G \rightarrow \mathbb{R}^2$ be a regular projection of $G$ where $D$ is obtained from $\tilde{f}(G)$. Let $v$ and $u$ be two vertices of $G$. Let $G'$ be the graph obtained by adding $2$ verticies $v_1,\ v_1'$ to $G$ such that $\tilde{f}(v_1)=\tilde{f}(v_1')=c$ and $v_1$\ (resp.\ $v_1'$) is contained in the over-arc\ (resp.\ the under-arc) at $c$. Let $P$ be the path from $v$ to $u$ that contains $v_1$. We fix a spanning tree $T$ of $G'$ that contains $P$\ (see for example Fig.\ \ref{subdthe}). Let $h: G' \rightarrow \mathbb{R}$ be a continuous function with the following properties : \\
$(1)\,$For each vertex $t$ of $G'$, $h(t)=-d_{T}(t,\ v)$. Here $d_{T}(t,v)$ be the number of edges of the path in $T$ joining $t$ and $v$.\\
$(2)\,h|_e$ is injective for each edge $e$ of $G'$\\
\begin{figure}[H]
      \begin{tabular}{c}
      \begin{minipage}[b]{0.5\hsize}
        \begin{center}
          \includegraphics[width=0.45\linewidth]{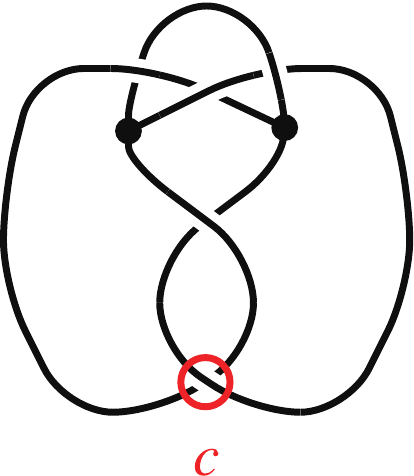} \vspace{0.3cm}\\
          $D$
        \end{center}
        
      \end{minipage}
      \begin{minipage}[b]{0.5\hsize}
        \begin{center}
          \includegraphics[width=0.7\linewidth]{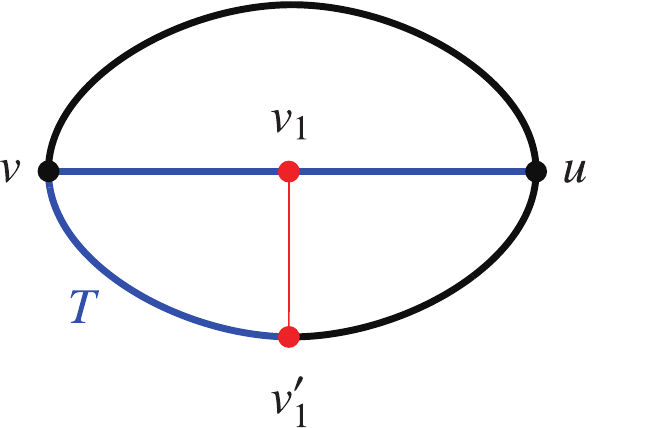} \vspace{0.3cm}\\
          $G'$
        \end{center}
      \end{minipage}
      \end{tabular}
      \caption{}
      \label{subdthe}
\end{figure}
We can deform $h$ slightly so that $h(v_1)>h(v_1')$ since $d_{T}(v,v_1)=d_{T}(v,v_1')=1$. Then we give over/under information to $\tilde{f}$ to produce a spatial embedding $f : G \rightarrow \mathbb{R}^3 = \mathbb{R}^2 \times \mathbb{R}$ such that $p_1 \circ f=\tilde{f}$ and $p_2 \circ f =h$, where $p_1$\ (resp. $p_2)$ denotes the projection of $\mathbb{R}^3$ to the first factor (respectively to the second factor) of $\mathbb{R}^2 \times \mathbb{R}$. Let $\Pi:\mathbb{R}^3 \rightarrow \mathbb{R}^2$ be a projection defined by $\Pi(x, y, z) = (x, z)$. We deform $f$ slightly by an ambient isotopy if necessary so that $\Pi \circ f$ is a regular projection. Then we can eliminate all crossings of $\Pi \circ f$ by  eliminating the crossing nearest to $v$ repeatedly (see Fig.\ \ref{heithe}). Therefore $f$ is trivial.
\begin{figure}[H]
\centering
\includegraphics[width=0.4\linewidth]{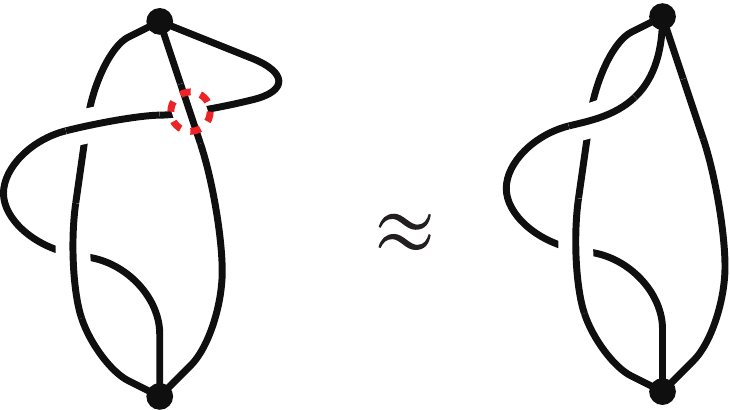}
      \caption{}
      \label{heithe}
\end{figure}
Let $D'$ be the diagram of $f(G)$ where $D'$ is obtained from $\tilde{f}(G)$. We note that $D$ and $D'$ are deformed  into each other by changing over/under informations of all crossing points without changing over/under informations of $c$. Let $D''$ be the diagram that is obtained from $D'$ by changing over/under informations of all crossing points with the exception of  $c$\ (see for example Fig.\ \ref{dandd}). Let $h':G' \rightarrow \mathbb{R}$ be a continuous function such that $h'=-h$. We can deform $h'$ slightly so that $h'(v_1)>h'(v_1')$. Then $D''$ is the diagram of a spatial embedding of $f' : G \rightarrow \mathbb{R}^3 = \mathbb{R}^2 \times \mathbb{R}$ such that $p_1 \circ f'=\tilde{f}$ and $p_2 \circ f'=h'$. Same as the case of $f(G)$, $f'$ is also trivial.
\begin{figure}[H]
      \begin{tabular}{c}
      \begin{minipage}[b]{0.5\hsize}
        \begin{center}
          \includegraphics[width=0.45\linewidth]{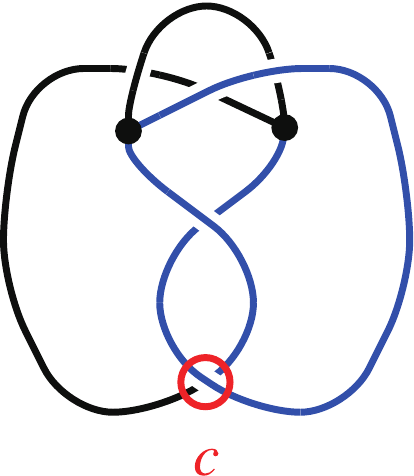} \vspace{0.3cm}\\
          $D'$
        \end{center}
        
      \end{minipage}
      \begin{minipage}[b]{0.5\hsize}
        \begin{center}
          \includegraphics[width=0.45\linewidth]{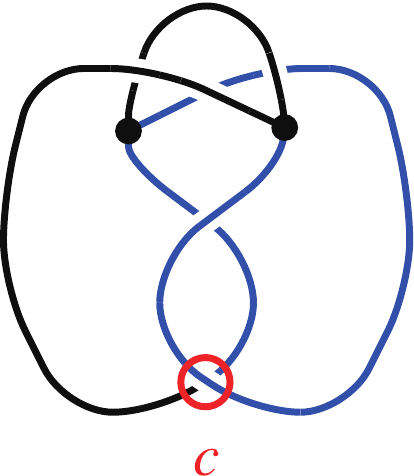} \vspace{0.3cm}\\
          $D''$
        \end{center}
      \end{minipage}
      \end{tabular}
      \caption{}
      \label{dandd}
\end{figure}

Let $A$ be a subset of $C(D)$ such that changing all crossings in $A$ turns $D$ to $D'$. We note that changing all crossings in $(\ C(D)-\{c\}\ )-A$ turns $D$ to $D''$. Therefore we have
$$u(D) \leq \min \{|A|,|(\ C(D)-\{c\}\ )-A|\} \leq \dfrac{c(D)-1}{2}$$ 

This is contradicts to the equation $u(D) = \dfrac{c(D)}{2}$. Therefore we have $c(D)=0$ and $D$ is a diagram of a trivial theta curve. 
\end{proof}

\noindent
\textit{Proof of Theorem\ref{theta}}\\
Let $f$ be a spatial embedding of $G$ such that $u(f)=\dfrac{c(f)}{2}$ and let $D$ be a minimal crossing diagram of $f(G)$. By Lemma\ \ref{mini} we have $u(D)=\dfrac{c(D)}{2}$. By Lemma \ref{dtheta} we see that $f$ is trivial.\qed

\begin{remark}\label{ge2} {\rm We can prove Theorem\ \ref{12c1} in the case of genus 2  by observing Lemma \ref{dhandcuff} and Lemma \ref{dtheta}. Let $D$ be a minimal crossing diagram of a non-trivial genus 2 handlebody-knot $H$. Then $D$ is also a diagram of a spatial embedding of a handcuff-graph or  a theta curve. 

In the case $D$ is a diagram of a spatial handcuff-graph, by Lemma \ref{dhandcuff} all crossings of $D$ are between two loops or $u(D) \leq \dfrac{c(D)-1}{2}$. In the case all crossings of $D$ are between two loops, by one IH-move on the edge that is not a loop we have a diagram $D'$ of $H$ such that $c(D')=c(D)=c(H)$ and $D'$ is also a diagram of a spatial theta curve (see Fig.\ \ref{wh}). By Lemma \ref{dtheta} we have $u(D') \leq \dfrac{c(D')-1}{2}$.  

In the case $D$ is a diagram of a spatial theta curve, by Lemma\ \ref{dtheta} we have $u(D) \leq \dfrac{c(D)-1}{2}$. In the both cases we have $u(H) \leq \dfrac{c(H)-1}{2}$.}
\end{remark}

\begin{figure}[H]
      \begin{tabular}{c}
      \begin{minipage}[b]{0.5\hsize}
        \begin{center}
          \includegraphics[width=0.35\linewidth]{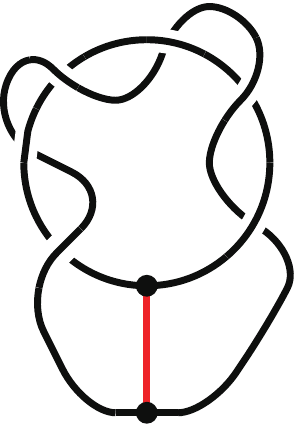}\vspace{0.5cm} \\
          $D$
        \end{center}
        
      \end{minipage}
      \begin{minipage}[b]{0.5\hsize}
        \begin{center}
          \includegraphics[width=0.35\linewidth]{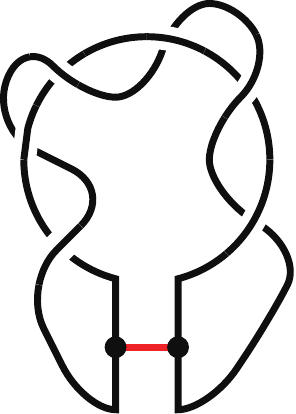}\vspace{0.5cm} \\
          $D'$
        \end{center}
      \end{minipage}
      \end{tabular}
      \caption{}
      \label{wh}
\end{figure}

\section{PROOFS OF THEOREM\ \ref{12c1} AND THEOREM\ \ref{12c2}}
In this section we prove Theorem \ref{12c1} and Theorem \ref{12c2}. In the following we give an inequality between unknotting number and crossing number by an observation of subdivided graph. 

Let $\tilde{f}:G \rightarrow \mathbb{R}^2$ be a regular projection of a graph $G$. Let $c_1,\ c_2,\ \cdots,\ c_k$ be crossing points of $\tilde{f}(G)$. A \textit{subdivided graph} of $G$ at $\{c_1,\ c_2,\ \cdots,\ c_k\}$ is a graph obtained by adding $2k$ vertices $v_1,\ v_1',\ v_2,\ v_2',\ \cdots ,v_k,\ v_k'$ to $G$ such that $\tilde{f}(v_i)=\tilde{f}(v_i')=c_i$ and $v_i$\ (resp.\ $v_i'$) is contained in the over-arc\ (resp.\ the under-arc) at $c_i$. Then we say that $v_i$\ (resp.\ $v_i')$ is an \textit{over-vertex}\ (resp.\ \textit{under-vertex}) at $c_i$. Let $T$ be a spanning tree of $G'$. For any two vertices $v$ and $u$ of $G'$, let $d_{T}(v,u)$ be the number of edges of the path in $T$ joining $v$ and $u$.

\begin{lemma}\label{31}
Let $D$ be a diagram of a nontrivial handlebody-knot $H$. Let $\tilde{f}:G \rightarrow \mathbb{R}^2$ be a regular projection of a connected trivalent graph $G$ where $D$ is obtained from $\tilde{f}(G)$. Let $c_1,\ c_2,\ \cdots,\ c_k$ be crossing points of $\tilde{f}(G)$. Let $G'$ be the subdivided graph of $G$ at $\{c_1,\ c_2,\ \cdots,\ c_k\}$. Let $v_1,\ v_1',\ v_2,\ v_2',\ \cdots ,v_k,\ v_k'$ be vertices of $G'$ such that $v_i$\ (resp.\ $v_i'$) is an over-vertex\ (resp.\ under-vertex) at $c_i\ (i=1,2,\cdots ,k)$. If there exists a vertex $v$ of $G'$ and a spanning tree $T$ of $G'$ such that $d_{T}(v,v_i)=d_{T}(v,v_i')$ for all $i \in \{1,2,\cdots,k\}$, then $u(D) \leq \dfrac{c(D)-k}{2}$. 
\end{lemma}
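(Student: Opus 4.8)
The plan is to generalize the height-function argument used in the proof of Lemma~\ref{dtheta}, now applied simultaneously at all $k$ crossings $c_1,\dots,c_k$ instead of at a single crossing. The essential role of the hypothesis $d_T(v,v_i)=d_T(v,v_i')$ is that it places the over-vertex $v_i$ and the under-vertex $v_i'$ at the \emph{same} tree-distance from $v$, so that once we build a height function from $T$ we are free to break the tie at each $c_i$ in the direction prescribed by $D$; this freedom is exactly what lets the $k$ special crossings be left unchanged.

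First I would define $h\colon G'\rightarrow\mathbb{R}$ by $h(t)=-d_T(t,v)$ on the vertices of $G'$ and extend it so that $h|_e$ is injective on each edge $e$, exactly as in Lemma~\ref{dtheta}. Since $d_T(v,v_i)=d_T(v,v_i')$, the values $h(v_i)$ and $h(v_i')$ coincide before perturbation, so I can deform $h$ slightly, simultaneously at all $i$, to arrange $h(v_i)>h(v_i')$ for every $i\in\{1,\dots,k\}$ while keeping $h|_e$ injective. As in the cited argument I then realize $\tilde f$ by a spatial embedding $f$ with $p_1\circ f=\tilde f$ and $p_2\circ f=h$, and show by the combing argument of \cite{knotted} (eliminating the crossing of $\Pi\circ f$ nearest to $v$ repeatedly) that $f$ is trivial; hence the regular neighborhood of $f(G)$ is a trivial handlebody-knot, and the diagram $D'$ obtained from $\tilde f$ by $f$ is a trivial handlebody-knot diagram. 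Because $p_1\circ f=\tilde f$, this $D'$ has the same underlying projection as $D$, and at each $c_i$ the strand through $v_i$ lies above the one through $v_i'$, so $D'$ agrees with $D$ at every $c_i$.

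Next I would repeat the construction with $h'=-h$, again perturbed so that $h'(v_i)>h'(v_i')$, obtaining a trivial diagram $D''$ that likewise agrees with $D$ at every $c_i$; moreover, at every crossing \emph{not} among $c_1,\dots,c_k$ the two strands are interior edge points whose relative heights are reversed by $h\mapsto -h$, so $D'$ and $D''$ carry opposite over/under information there. The proof then finishes with crossing-change bookkeeping. Let $A\subseteq C(D)$ be the set of crossings at which $D'$ differs from $D$. Since $D'$ matches $D$ at each $c_i$, we have $A\subseteq C(D)\setminus\{c_1,\dots,c_k\}$, and by the previous remark $D''$ is obtained from $D$ by changing exactly the crossings in $\bigl(C(D)\setminus\{c_1,\dots,c_k\}\bigr)\setminus A$. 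As both $D'$ and $D''$ are trivial handlebody-knot diagrams,
$$u(D)\le\min\bigl\{\,|A|,\ (c(D)-k)-|A|\,\bigr\}\le\frac{c(D)-k}{2}.$$

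The main obstacle I expect is the simultaneous perturbation step: I must verify that breaking all $k$ ties at once can be achieved by a perturbation small enough to preserve both the injectivity of $h$ along edges and the monotone-along-$T$ structure on which the triviality argument of \cite{knotted} relies, and that the chosen tie-breaks really do reproduce the over/under data of $D$ at every $c_i$. Once this is in place, the remainder is the same $\min\{|A|,\cdots\}$ estimate as in Lemma~\ref{dtheta}, with the single excluded crossing $\{c\}$ replaced by the $k$ excluded crossings $\{c_1,\dots,c_k\}$, which is precisely what upgrades the bound $\tfrac{c(D)-1}{2}$ to $\tfrac{c(D)-k}{2}$.
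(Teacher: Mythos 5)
There is a genuine gap at the heart of your argument: you claim that the combing argument of \cite{knotted} (eliminating the crossing of $\Pi\circ f$ nearest to $v$ repeatedly) shows that the spatial embedding $f$ itself is \emph{trivial}, and you then deduce triviality of the handlebody-knot from that. This cannot work for the graphs in Lemma~\ref{31}. Here $G$ is an arbitrary connected trivalent spine of a handlebody-knot, not a theta curve: it need not be trivializable (so some projections admit \emph{no} trivial embedding at all, as with the cube graph $P_3$ in Section~2), and for genus $\geq 4$ it need not even be planar (e.g.\ $K_{3,3}$ is trivalent), in which case ``$f$ is trivial'' is meaningless. The correct conclusion, and the one the paper actually proves, is the strictly weaker statement that the \emph{handlebody-knot} represented by $D'$ is trivial: one contracts the spatial spanning tree $f(T)$ (which does not change the regular neighborhood) to obtain a bouquet each of whose loops has exactly one minimum of the height function, and such a ``plat'' bouquet is a trivial handlebody-knot, following the proof of \cite[Proposition~3.1]{forbidden}. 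Your route through triviality of the spatial graph is the step that would fail.

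A related, smaller issue is your choice of conditions on $h$: you ask that $h|_e$ be injective on \emph{every} edge of $G'$, mimicking Lemma~\ref{dtheta}. For a non-tree edge whose two endpoints lie at the same tree-distance from $v$ this is impossible, and in any case injectivity is not what the triviality argument needs. The paper instead requires $h|_e$ injective only on edges of $T$, and that each edge of $G'-T$ have exactly one minimum point of $h$; it is precisely this last condition that makes the contracted bouquet trivial. Your final bookkeeping with the set $A$ and the bound $\min\{|A|,\ (c(D)-k)-|A|\}\leq \frac{c(D)-k}{2}$ agrees with the paper and is fine once the triviality of $D'$ and $D''$ as handlebody-knot diagrams is established correctly.
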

\begin{proof}
The proof is analogous to the proof of \cite[Proposition\ 3.2]{forbidden}. We fix a vertex $v$ of $G'$ and a spanning tree $T$ of $G'$ such that $d_{T}(v,v_i)=d_{T}(v,v_i')$ for all $i \in \{1,2,\cdots,k\}$\ (see Fig.\ \ref{311}). Let $h:G' \rightarrow \mathbb{R}$ be a continuous function with the following properties : 
\begin{figure}[H]
      \begin{tabular}{c}
      \begin{minipage}{0.5\hsize}
        \begin{center}
          \includegraphics[width=0.5\linewidth]{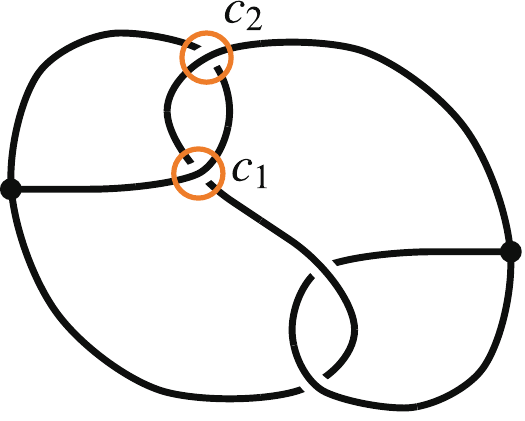}
        \end{center}
      \end{minipage}
      \begin{minipage}{0.5\hsize}
        \begin{center}
          \includegraphics[width=0.5\linewidth]{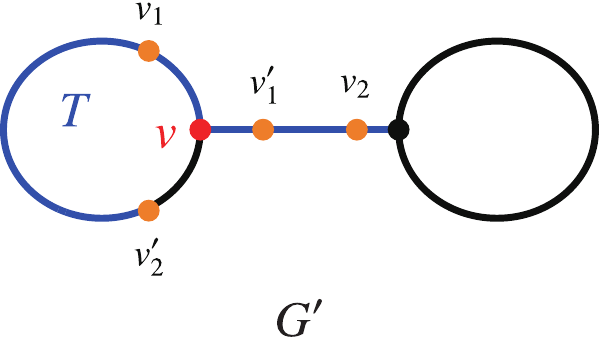}
        \end{center}
      \end{minipage}
      \end{tabular}
      \caption{}
      \label{311}
\end{figure}
\noindent
$(1)\,$For each vertex $u$ of $T$, $h|_T(u)=-d_{T}(v,u)$.\\
$(2)\,h|_e$ is injective for each edge $e$ of $T$.\\
$(3)\,$Each edge of $G'-T$ has exactly one minimum point of $h$. \\
We can deform $h$ slightly so that $h(v_i)>h(v_i')\ (i=1,2,\cdots,k)$ since $d_{T}(v,v_i)=d_{T}(v,v_i')\ (i=1,2,\cdots,k)$. Then we give over/under informations to $\tilde{f}$ to produce a spatial embedding $f : G \rightarrow \mathbb{R}^3 = \mathbb{R}^2 \times \mathbb{R}$ such that $p_1 \circ f=\tilde{f}$ and $p_2 \circ f =h$, where $p_1$\ (respectively $p_2)$ denotes the projection of $\mathbb{R}^3$ to the first factor (respectively to the second factor) of $\mathbb{R}^2 \times \mathbb{R}$.

Let $D'$ be the diagram of $f(G)$ where $D'$ is obtained from $\tilde{f}(G)$. We note that $D$ and $D'$ are deformed  into each other by changing over/under informations of crossing points without changing over/under informations of $c_1,\ c_2,\ \cdots,\ c_k$\ (see for example Fig.\ \ref{312}). Since we obtain a bouquet as in Fig.\ \ref{313} which is trivial by contracting spatial edges of $f(T)$, $D'$ is a diagram of a trivial handlebody-knot. 

\begin{figure}[H]
      \begin{tabular}{c}
      \begin{minipage}{0.5\hsize}
        \begin{center}
          \includegraphics[width=0.6\linewidth]{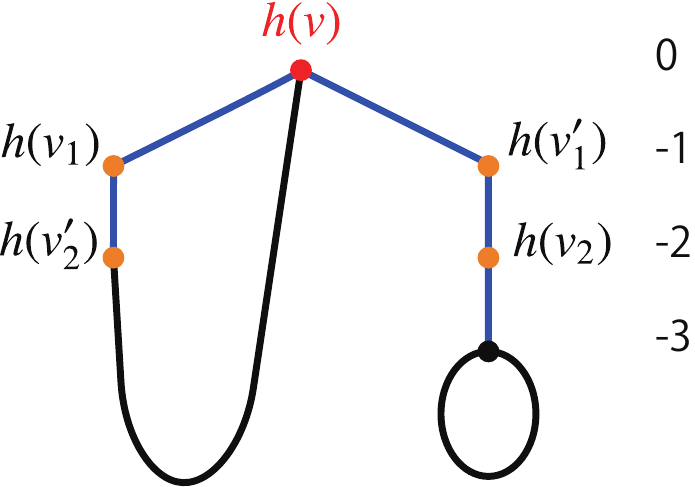}
        \end{center}
      \end{minipage}
      \begin{minipage}{0.5\hsize}
        \begin{center}
          \includegraphics[width=0.5\linewidth]{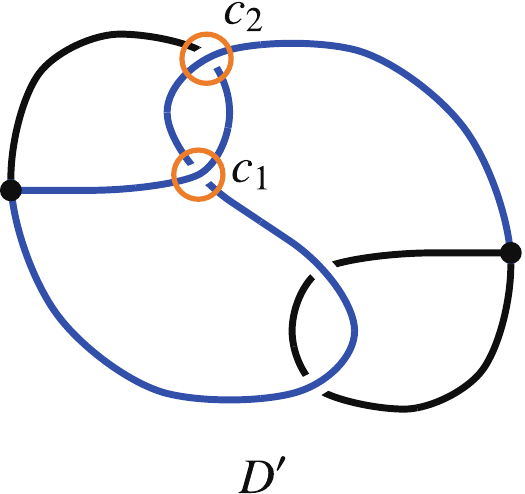}
        \end{center}
      \end{minipage}
      \end{tabular}
      \caption{}
      \label{312}
\end{figure}

\begin{figure}[H]
      \begin{tabular}{c}
      \begin{minipage}{0.5\hsize}
        \begin{center}
          \includegraphics[width=0.4\linewidth]{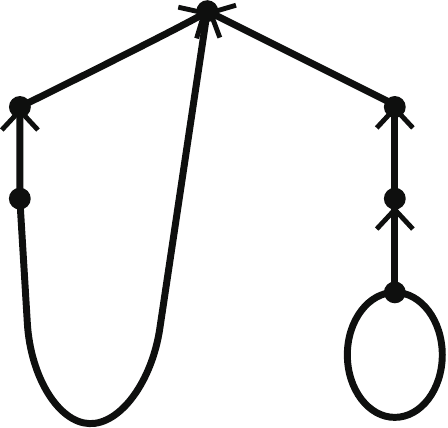}
        \end{center}
      \end{minipage}
      \begin{minipage}{0.5\hsize}
        \begin{center}
          \includegraphics[width=0.4\linewidth]{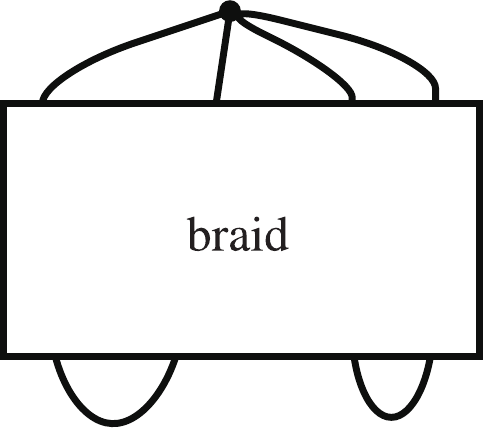}
        \end{center}
      \end{minipage}
      \end{tabular}
      \caption{}
      \label{313}
\end{figure}

Let $D''$ be the diagram that is obtained from $D'$ by changing over/under informations of all crossing points with the exception of  $c_1,\ c_2,\ \cdots,\ c_k$. Let $h':G' \rightarrow \mathbb{R}$ be a continuous function such that $h'=-h$. We can deform $h'$ slightly so that $h'(v_i)>h'(v_i')\ (i=1,2,\cdots,k)$\  (see for example Fig.\ \ref{314}). Then $D''$ is the diagram of a spatial embedding of $f' : G \rightarrow \mathbb{R}^3 = \mathbb{R}^2 \times \mathbb{R}$ such that $p_1 \circ f'=\tilde{f}$ and $p_2 \circ f'=h'$. Thus we obtain a trivial bouquet by contracting spatial edges of $f'(T)$ and $D''$ is a diagram of a trivial handlebody-knot. 

\begin{figure}[H]
      \begin{tabular}{c}
      \begin{minipage}{0.5\hsize}
        \begin{center}
          \includegraphics[width=0.5\linewidth]{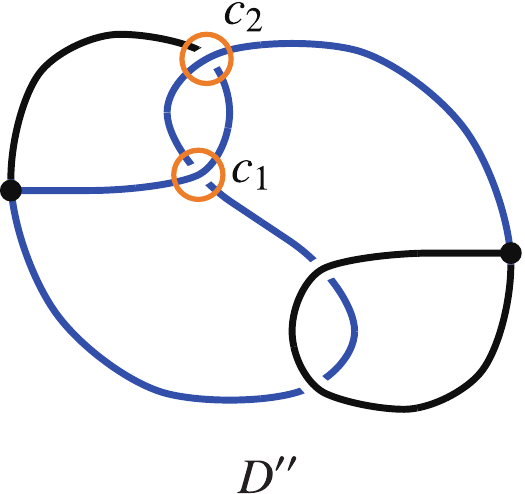}
        \end{center}
      \end{minipage}
      \begin{minipage}{0.5\hsize}
        \begin{center}
          \includegraphics[width=0.6\linewidth]{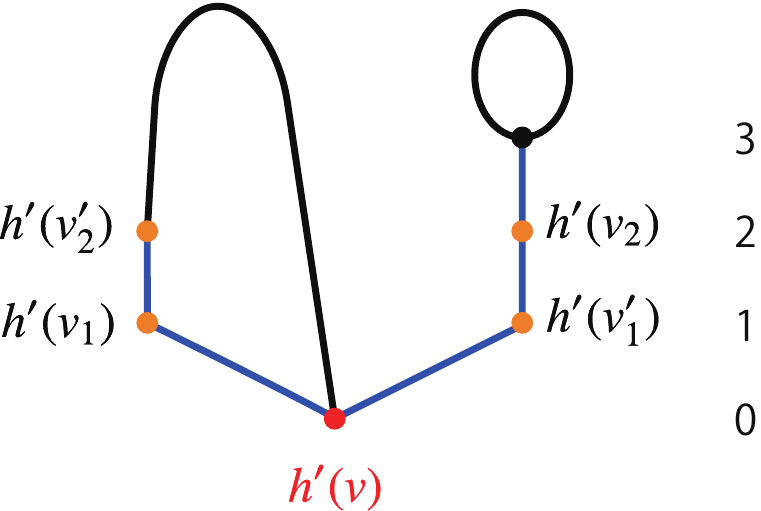}
        \end{center}
      \end{minipage}
      \end{tabular}
      \caption{}
      \label{314}
\end{figure}

Let $A$ be a subset of $C(D)$ such that changing all crossings in $A$ turns $D$ to $D'$. We note that changing all crossings in $(\ C(D)-\{c_1,\ c_2,\ \cdots,\ c_k\}\ )-A$ turns $D$ to $D''$. Therefore we have
$$u(D) \leq \min \{|A|,|(\ C(D)-\{c_1,\ c_2,\ \cdots,\ c_k\}\ )-A|\} \leq \dfrac{c(D)-k}{2}$$ 
\end{proof}
\noindent
\textit{Proof of Theorem \ref{12c1}}\\
Let $D$ be a minimal crossing diagram of a nontrivial handlebody-knot $H$. Let $\tilde{f}:G \rightarrow \mathbb{R}^2$ be a regular projection of a connected trivalent graph $G$ where $D$ is obtained from $\tilde{f}(G)$. Let $c_1$ be a crossing point of $\tilde{f}(G)$. Let $G'$ be the subdivided graph of $G$ at $\{c_1\}$. Let $v_1$\ (resp.\ $v_1'$) be a vertex of $G'$ such that $v_1\ (resp.\ v_1')$ is over-vertex (resp.\ under-vertex) at $c_1$. Let $T$ be a spanning tree of $G'$ containing $v_1$ and $v_1'$. \\
\begin{figure}[H]
      \begin{tabular}{c}
      \begin{minipage}{0.45\hsize}
        \begin{center}
          \includegraphics[width=0.5\linewidth]{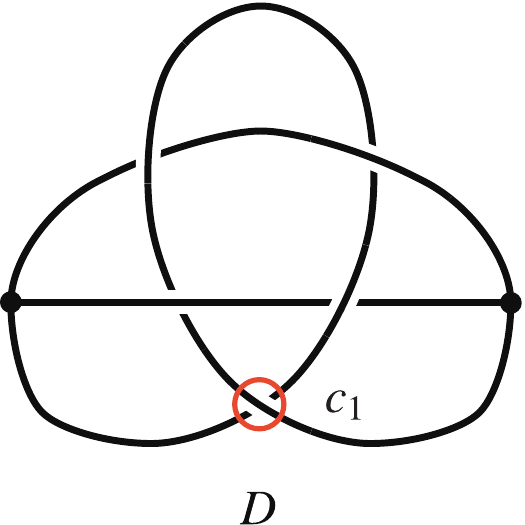}
        \end{center}
      \end{minipage}
      \begin{minipage}{0.45\hsize}
        \begin{center}
          \includegraphics[width=0.5\linewidth]{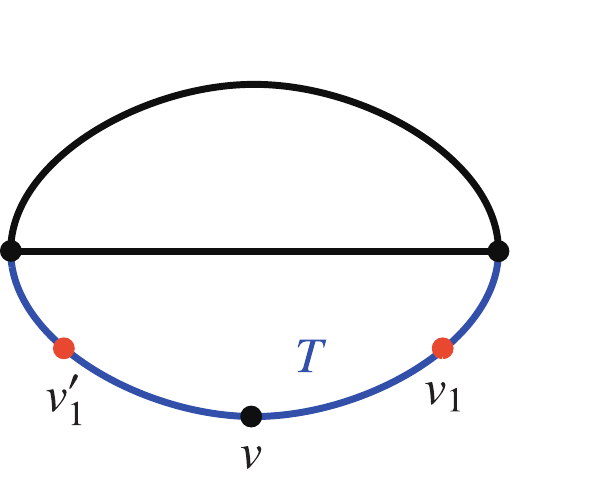}
        \end{center}
      \end{minipage}
      \end{tabular}
      \caption{}
      \label{141}
\end{figure}
By subdividing $T$ if necessary, we can choose a vertex $v$ of $T$ such that $d_{T}(v,v_1)=d_{T}(v,v_1')$ since there exists a path in $T$ joining $v_1$ and $v_1'$\ (see for example Fig.\ \ref{141}). Then by Lemma \ref{31} we have $u(D) \leq \dfrac{c(D)-1}{2}$. Since $u(H) \leq u(D)$ and $c(D)=c(H)$ we have $u(H) \leq \dfrac{c(H)-1}{2}$. \qed

\begin{lemma}\label{d12c2}
Let $D$ be a minimal crossing diagram of a nontrivial handlebody-knot $H$ that satisfies the equality $u(D)=\dfrac{c(D)-1}{2}$. Let $\gamma$ be a cycle of $D$ that has at least one crossing of $D$. Then the following $(1)$ and $(2)$ holds.\\
$(1)\,$All crossings of $D$ are self-crossings of $\gamma$.\\
$(2)\,$There exists an odd number $p \neq \pm1$ such that $\gamma$ is a reduced alternating diagram of a $(2,\ p)$-torus knot.
\end{lemma}
\begin{proof}
Suppose that $\gamma$ has just one crossing. Suppose that $\gamma$ itself is a simple closed curve on $\mathbb{R}^2$. Then we have a diagram $D'$ of $H$ as illustrated in Fig.\ \ref{cl1} such that $c(D')=c(D)-1$. Suppose that $\gamma$ is not a simple closed curve on $\mathbb{R}^2$ and $\gamma$ has exactly one crossing of $D$. Then by a similar deformation we have a diagram $D'$ of $H$ with $c(D')=c(D)-1$. This contradicts that $D$ is a minimal crossing diagram. Thus $\gamma$ has at least two crossings of $D$. \\
\begin{figure}[h]
      \begin{tabular}{c}
      \begin{minipage}{0.3\hsize}
        \begin{center}
          \includegraphics[width=0.8\linewidth]{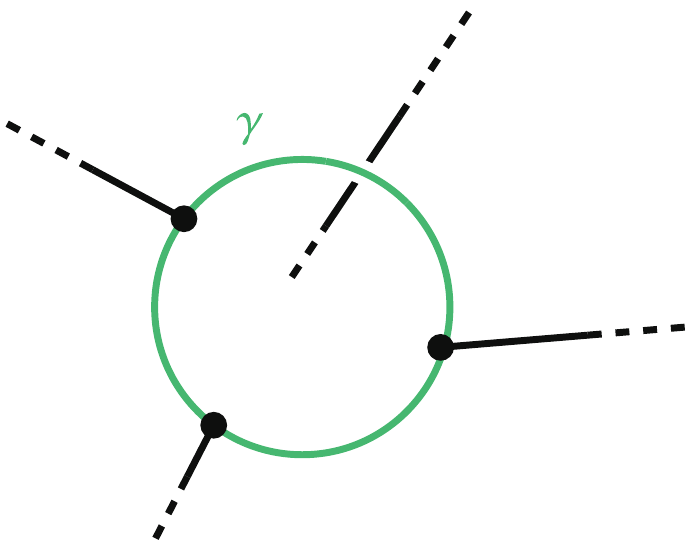}\\
          $D$
        \end{center}
      \end{minipage}

      \begin{minipage}{0.3\hsize}
        \begin{center}
          \includegraphics[width=0.8\linewidth]{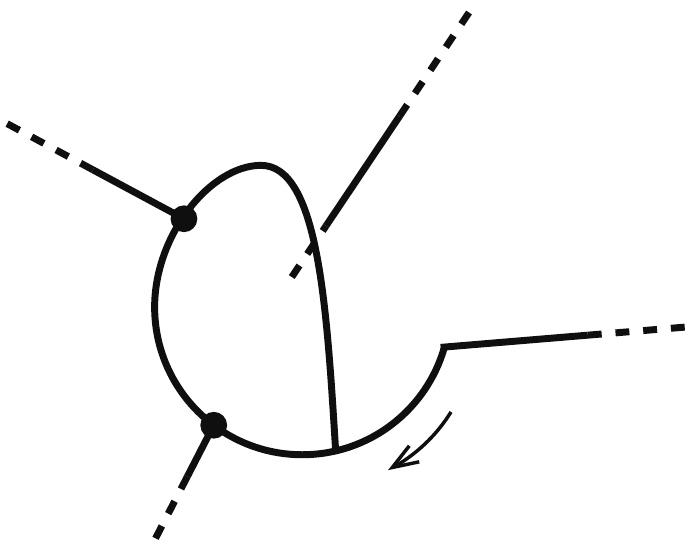}\\
          \ \ 
        \end{center}
      \end{minipage}

      \begin{minipage}{0.3\hsize}
        \begin{center}
          \includegraphics[width=0.8\linewidth]{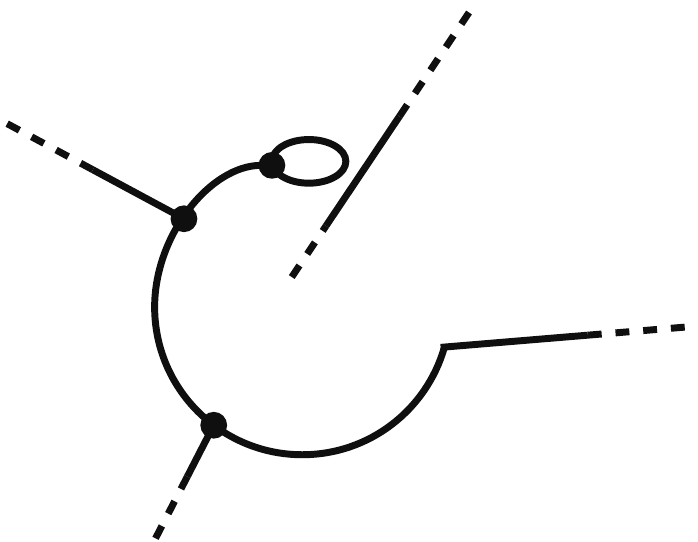}\\
          $D'$
        \end{center}
      \end{minipage}
      \end{tabular}
      \caption{}
      \label{cl1}
\end{figure}\\
Let $\tilde{f}$ be a regular projection of a trivalent graph $G$ where $D$ is obtained from $\tilde{f}(G)$. First, we show that if $(1)$ does not hold, then $u(D)\leq \dfrac{c(D)-2}{2}$. We will show this claim step by step as follows.\\
\ \\
\textbf{Subclaim 1.}\ \textit{If one of the crossings on $\gamma$, say $c_1$, is a crossing  between $\gamma$ and $D-\gamma$ and another crossing on $\gamma$, say $c_2$, is a self-crossing of $\gamma$, then $u(D) \leq \dfrac{c(D)-2}{2}$.}
\begin{proof}
Let $G'$ be the subdivided graph of $G$ at $\{c_1,\ c_2\}$. Let $v_i$\ (resp.\ $v_i')$ be the over-vertex\ (resp.\ under-vertex) at $c_i$\ $(i=1,\ 2)$. Then $G'$ is the graph as illustrated in Fig.\ \ref{slmu}\ $(a)$ or $(b)$. 


\begin{figure}[H]
      \begin{tabular}{c}
      \begin{minipage}{0.5\hsize}
        \begin{center}
          \includegraphics[height=0.4\linewidth]{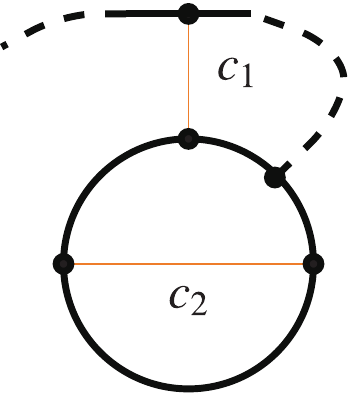}
        \end{center}
        \subcaption{}
      \end{minipage}
      \begin{minipage}{0.5\hsize}
        \begin{center}
          \includegraphics[height=0.4\linewidth]{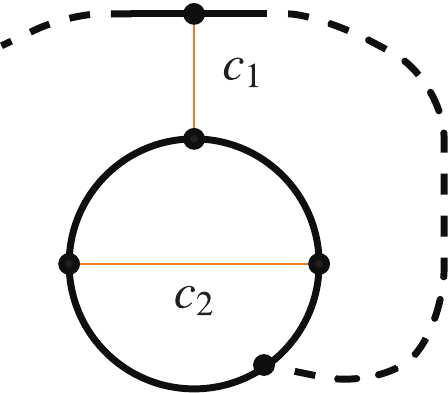}
        \end{center}
        \subcaption{}
      \end{minipage}
      \end{tabular}
      \caption{}
      \label{slmu}
\end{figure}

\begin{figure}[H]
        \begin{center}
          \includegraphics[width=0.2\linewidth]{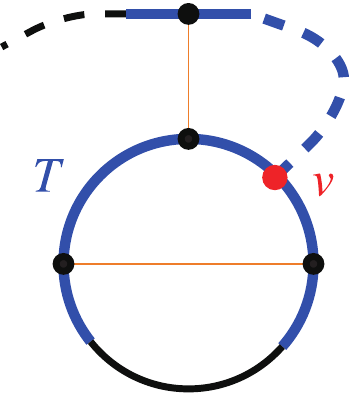}
        \end{center}     
      \caption{}
      \label{slmua}
\end{figure}

By subdividing if necessary, we can choose a spanning tree $T$ of $G'$ and a vertex $v$ of $T$ such that $d_T(v,v_i)=d_T(v,v_i')\ (i=1,2)$. A choice of $T$ and  $v$ for the case of Fig.\ \ref{slmu}\ $(a)$ is illustrated in Fig.\ \ref{slmua}. By Lemma \ref{31} we have $u(D)\leq \dfrac{c(D)-2}{2}$. 
\end{proof}
\noindent
\textbf{Subclaim 2.}\ \textit{If two of crossings on $\gamma$, say $c_1$ and $c_2$, are  crossings  between $\gamma$ and $D-\gamma$ then $u(D) \leq \dfrac{c(D)-2}{2}$.} 
\begin{proof}
Let $G'$ be the subdivided graph of $G$ at $\{c_1,\ c_2\}$. Let $v_i$\ (resp.\ $v_i')$ be the over-vertex\ (resp.\ under-vertex) at $c_i$\ $(i=1,\ 2)$. Then $G'$ is one of the graphs as illustrated in Fig.\ \ref{mumu}. 

\begin{figure}[H]
      \begin{tabular}{c}
      \begin{minipage}{0.24\hsize}
        \begin{center}
          \includegraphics[height=0.8\linewidth]{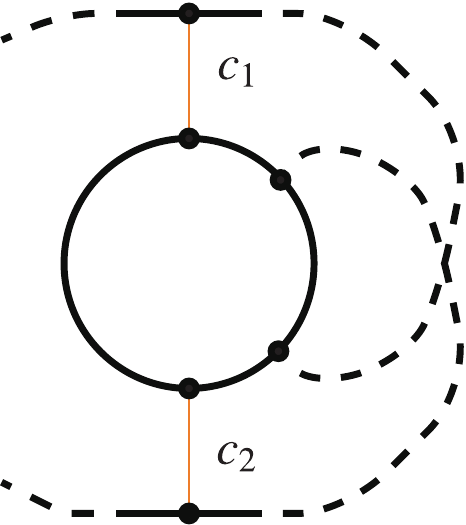}
        \end{center}
        \subcaption{}
      \end{minipage}
      \begin{minipage}{0.24\hsize}
        \begin{center}
          \includegraphics[height=0.8\linewidth]{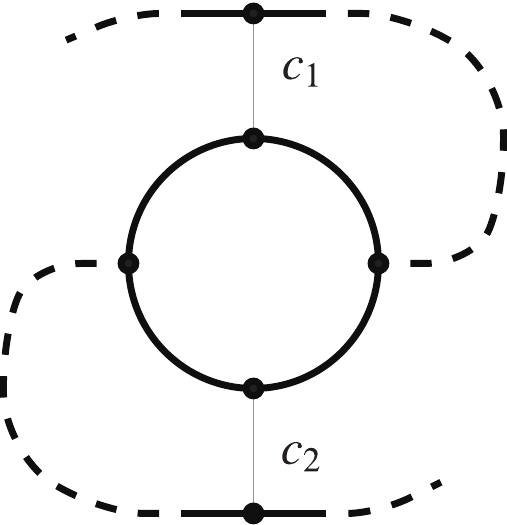}
        \end{center}
        \subcaption{}
        \end{minipage}
        \begin{minipage}{0.24\hsize}
        \begin{center}
          \includegraphics[height=0.8\linewidth]{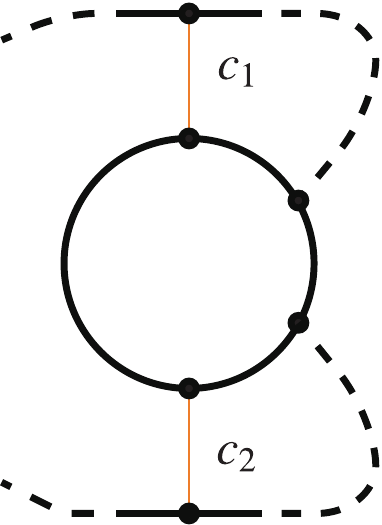}
        \end{center}
        \subcaption{}
      \end{minipage}
      \begin{minipage}{0.24\hsize}
        \begin{center}
          \includegraphics[height=0.8\linewidth]{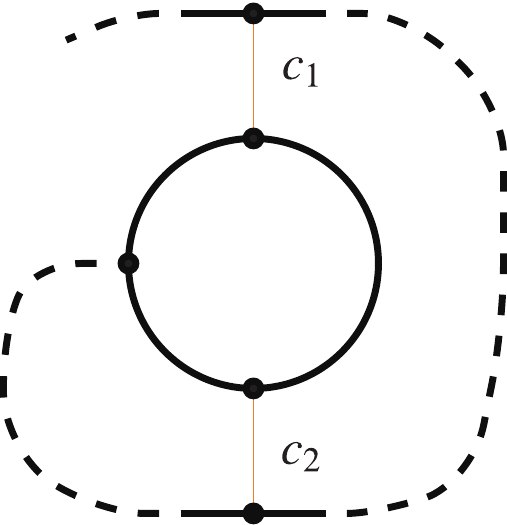}
        \end{center}
        \subcaption{}
      \end{minipage}
      
      \end{tabular}
      
      \caption{}
      \label{mumu}
\end{figure}

\begin{figure}[H]
        \begin{center}
          \includegraphics[width=0.25\linewidth]{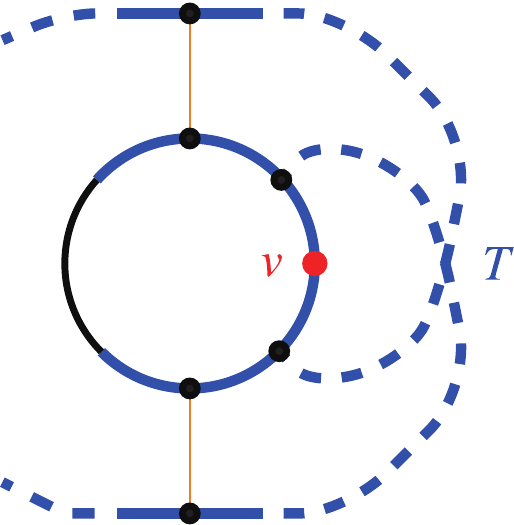}
        \end{center}     
      \caption{}
      \label{mumua}
\end{figure}

By subdividing if necessary, we can choose a spanning tree $T$ of $G'$ containing $v_1,\,v_1',\,v_2,\,v_2'$ and a vertex $v$ of $T$ such that $d_T(v,v_i)=d_T(v,v_i')\ (i=1,2)$. A choice of $T$ and $v$ for the case of Fig.\ \ref{mumu}\ $(a)$ is illustrated in Fig.\ \ref{mumua}. By Lemma \ref{31} we have $u(D)\leq \dfrac{c(D)-2}{2}$. 
\end{proof}
\noindent
\textbf{Subclaim 3.}\ \textit{If there exists a self-crossing of $D-\gamma$, say $c_1$,  then $u(D) \leq \dfrac{c(D)-2}{2}$.} 
\begin{proof}
By Subclaim 2 we may assume that $\gamma$ has a self-crossing, say $c_2$. Let $G'$ be the subdivided graph of $G$ at $\{c_1,\ c_2\}$. Let $v_i$\ (resp.\ $v_i')$ be the over-vertex\ (resp.\ under-vertex) at $c_i$\ $(i=1,\ 2)$. Then $G'$ is one of the graphs as illustrated in Fig.\ \ref{sese}. 

\begin{figure}[H]
      \begin{tabular}{c}
      \begin{minipage}{0.24\hsize}
        \begin{center}
          \includegraphics[height=0.42\linewidth]{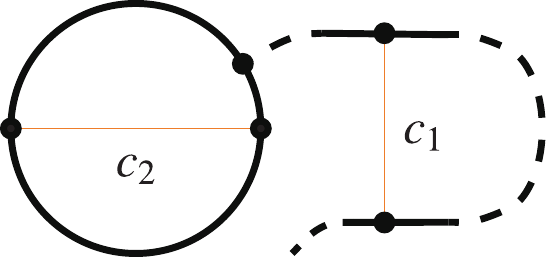}
        \end{center}
        \subcaption{}
      \end{minipage}
      \begin{minipage}{0.24\hsize}
        \begin{center}
          \includegraphics[height=0.42\linewidth]{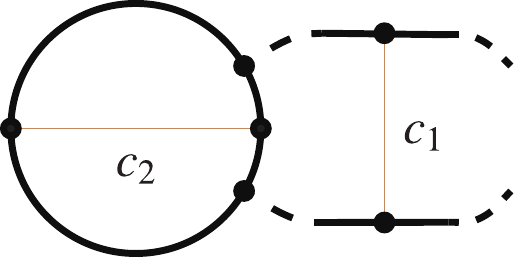}
        \end{center}
        \subcaption{}
        \end{minipage}
        \begin{minipage}{0.24\hsize}
        \begin{center}
          \includegraphics[height=0.42\linewidth]{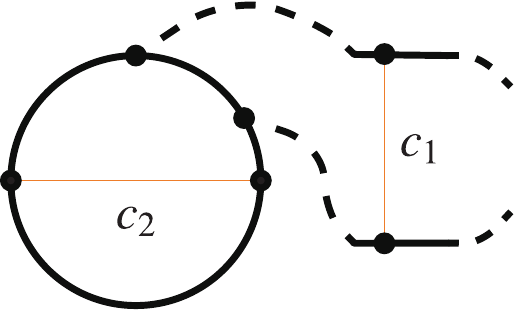}
        \end{center}
        \subcaption{}
      \end{minipage}
      \begin{minipage}{0.24\hsize}
        \begin{center}
          \includegraphics[height=0.42\linewidth]{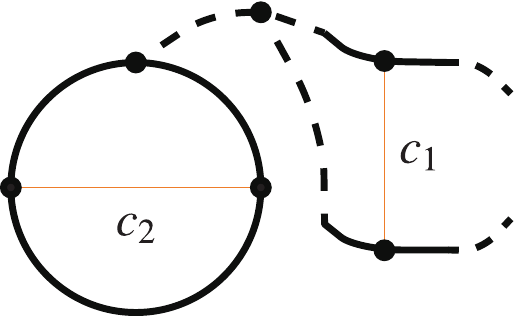}
        \end{center}
        \subcaption{}
      \end{minipage}
      
      \end{tabular}
      \caption{}
      \label{sese}
\end{figure}

\begin{figure}[H]
        \begin{center}
          \includegraphics[width=0.25\linewidth]{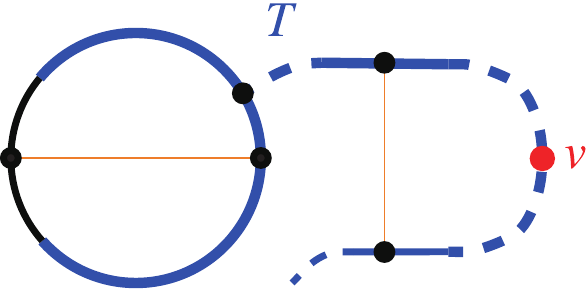}
        \end{center}     
      \caption{}
      \label{sesea}
\end{figure}

By subdividing if necessary, we can choose a spanning tree $T$ of $G'$ containing $v_1,\,v_1',\,v_2,\,v_2'$ and a vertex $v$ of $T$ such that $d_T(v,v_i)=d_T(v,v_i')\ (i=1,2)$.  A choice of $T$ and $v$ for the case Fig.\ \ref{sese}\ $(a)$ is illustrated in Fig.\ \ref{sesea}.  By Lemma \ref{31} we have $u(D)\leq \dfrac{c(D)-2}{2}$. 
\end{proof}
 
From the above we see that $\gamma$ satisfies $(1)$. \\
\ \\
\textbf{Subclaim 4.}\ \textit{If $\gamma$ is not obtained from a standard projection of a $(2,\ p)$-torus knot for any odd number $p>1$, then $u(D) \leq \dfrac{c(D)-2}{2}$.}

\begin{proof}

Let $G'$ be the subdivided graph of $G$ at $C(D)$ and let $\Gamma$ be a cycle of $G'$ such that $\gamma$ is obtained from $\tilde{f}(\Gamma)$. Note that if $\tilde{f}(\Gamma)$ is a standard projection of a $(2,\ p)$-torus knot for some odd number $p\neq \pm 1$ as the case $p=5$ is illustrated in the left of Fig.\ \ref{pro25}, then any pair of crossings on $\gamma$ are not parallel. Namely $\Gamma$ is as illustrated in the right of Fig.\ \ref{pro25}. It follows from \cite[Theorem\ 1]{proje} that the converse is also true (see also \cite[Proof of Theorem\ 1.11]{pseudo}).

\begin{figure}[h]
      \begin{tabular}{c}
      
      \begin{minipage}[b]{0.5\hsize}
        \begin{center}
          \includegraphics[width=0.4\linewidth]{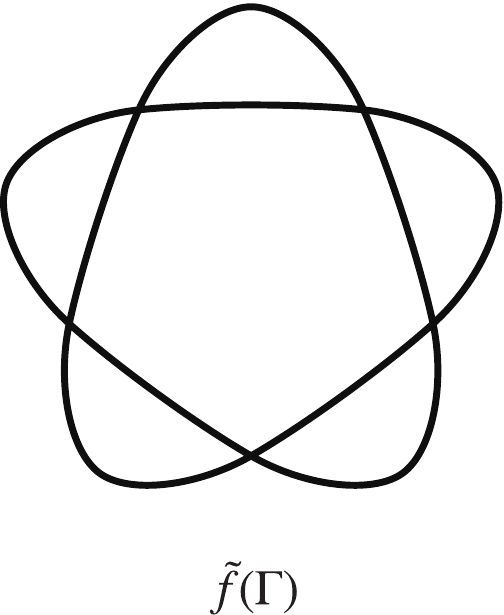}
        \end{center}
      \end{minipage}
      
      \begin{minipage}[b]{0.5\hsize}
        \begin{center}
          \includegraphics[width=0.4\linewidth]{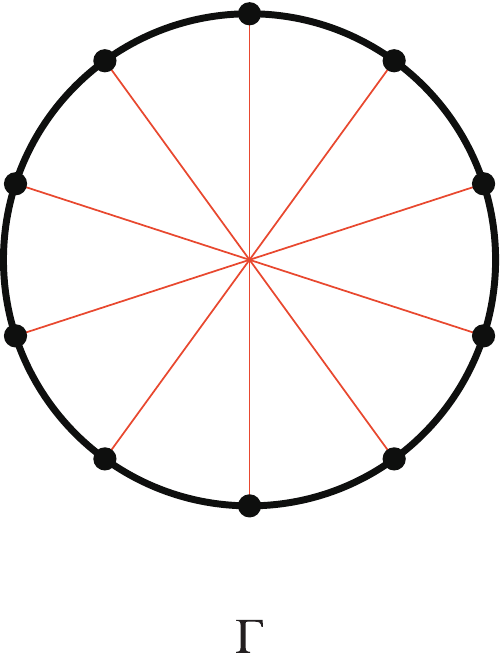}
        \end{center}
      \end{minipage}
      
      \end{tabular}
      \caption{}
      \label{pro25}
\end{figure}

Therefore there are two crossings $c_1$, $c_2$ of $\gamma$ such that $c_1$ and $c_2$ are parallel, namely $\Gamma$ is illustrated as the left of Fig.\ \ref{para}. Let $v_i$\ (resp.\ $v_i')$ be the over-vertex\ (resp.\ under-vertex) at $c_i$\ $(i=1,\ 2)$. By subdividing if necessary, we can choose a spanning tree $T$ of $G'$ and a vertex $v$ of $T$ such that $d_T(v,v_i)=d_T(v,v_i')\ (i=1,2)$\ (see the right of Fig.\ \ref{para}). By Lemma \ref{31} we have $u(D)\leq \dfrac{c(D)-2}{2}$. 
\begin{figure}[h]
      \begin{tabular}{c}
      \begin{minipage}[t]{0.45\hsize}
        \begin{center}
          \includegraphics[width=0.4\linewidth]{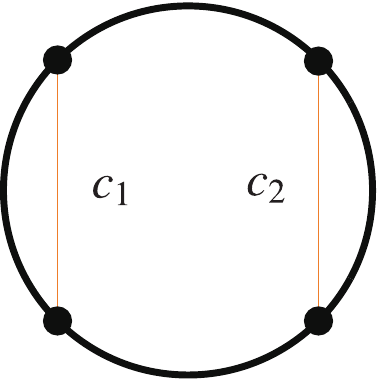}
        \end{center}
      \end{minipage}

      \begin{minipage}[t]{0.45\hsize}
        \begin{center}
          \includegraphics[width=0.4\linewidth]{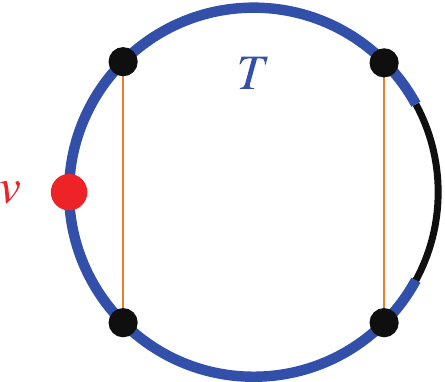}
        \end{center}
      \end{minipage}
      \end{tabular}
      \caption{}
      \label{para}
\end{figure}

\end{proof}

Finally, we show that if $u(D)=\dfrac{c(D)-1}{2}$, then $u(\gamma)=u(D)$ and $\gamma$ is a reduced alternating diagram of a $(2,\ p)-$ torus knot. 

Let $\Gamma$ be a cycle of $G$ such that $\gamma$ is obtained from $\tilde{f}(\Gamma)$. From the above $\tilde{f}(\Gamma)$ is a standard projection of a $(2,\ p)-$torus knot as the case $p=5$ is illustrated in the left of Fig.\ \ref{pro25} and $c(\gamma)=c(D)=p$. If we can join two components of $\tilde{f}(\Gamma) \backslash C(\tilde{f}(G))$ by a path $P$ of $\tilde{f}(G)$ as illustrated in the left of Fig.\ \ref{pro2}, then there exists a cycle $\gamma'$ of $D$ that has a crossing between $\gamma$ and $D-\gamma$ as illustrated in the right of Fig.\ \ref{pro2}. This is contradict to Lemma\ \ref{d12c2}\ $(1)$. 

\begin{figure}[h]
      \begin{tabular}{c}
      \begin{minipage}[t]{0.45\hsize}
        \begin{center}
          \includegraphics[width=0.5\linewidth]{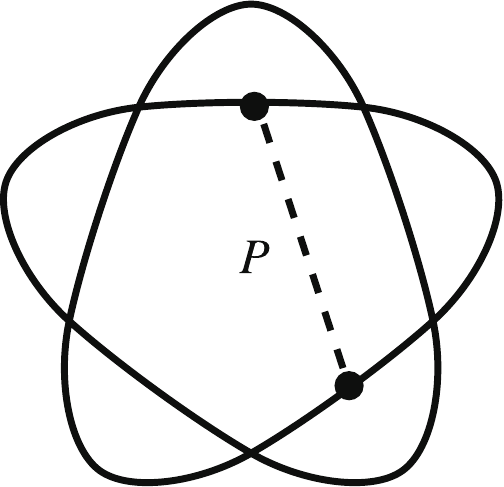}
        \end{center}
      \end{minipage}

      \begin{minipage}[t]{0.45\hsize}
        \begin{center}
          \includegraphics[width=0.5\linewidth]{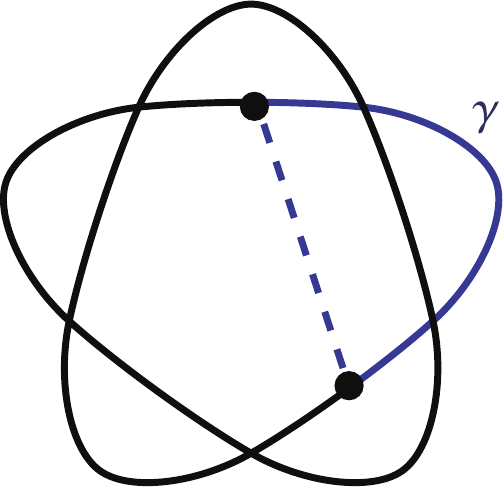}
        \end{center}
      \end{minipage}
      \end{tabular}
      \caption{}
      \label{pro2}
\end{figure}

Therefore we may assume that $\tilde{f}(G)$ has no paths as illustrated in the left of Fig.\ \ref{pro2}, namely $\tilde{f}(G)$ is a projection as illustrated in Fig.\ \ref{pro3}. Then by changing over/under informations at $u(\gamma)$ crossings on $D$ we can obtain a diagram of a trivial handlebody-knot. Since $c(\gamma)=c(D),\ u(D)=\dfrac{c(D)-1}{2}$ and $u(D) \leq u(\gamma)$, we have $u(\gamma)=\dfrac{c(\gamma)-1}{2}$. By Theorem \ref{12c2k}, $\gamma$ is a reduced alternating diagram of a $(2,\ p)$-torus knot for some odd number $p \neq \pm 1$. 

\begin{figure}[h]
\centering
  \includegraphics[width=0.25\linewidth]{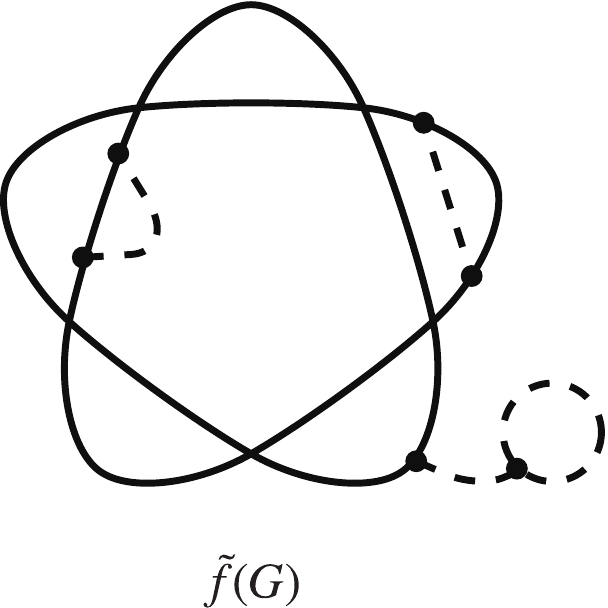}
  \caption{}
      \label{pro3}
\end{figure}

\end{proof}

\noindent
\textit{Proof of Thm \ref{12c2}}\\
Let $H$ be a nontrivial handlebody-knot that satisfies the equality $u(H) = \dfrac{c(H) -1}{2}$. Let $D$ be a minimal crossing diagram of $H$. Since $u(H) \leq u(D)$ and $\dfrac{c(D)-1}{2} =\dfrac{c(H)-1}{2}$ we have $u(D) \geq \dfrac{c(D)-1}{2}$. Thus by the proof of Theorem\ \ref{12c1} we have $u(D) = \dfrac{c(D)-1}{2}$. Then by Lemma\ \ref{d12c2} we see that $H$ is a handlebody-knot represented by one of diagrams illustrated in Fig.\ \ref{d2n1}. We note that the unknotting number of handlebody-knot represented by $D_{2n-1}(n \ne 0,1)$ are one by Proposition \ref{2br}. Therefore $H$ is a handlebody-knot represented by $D_3$ or $D_{-3}$ as desired. \qed

\section*{Acknowledgements}
The author would like to thank Professor Tomo Murao for his helpful comments. He is particularly grateful to Professor Kouki Taniyama for invaluable advice and
his suggestions.

\bibliographystyle{myplain2}
\bibliography{citations.bib}

\end{document}